\documentclass{elsarticle}

\usepackage{amsmath}
\usepackage{amssymb}
\usepackage{amsthm}
\usepackage{algorithm}
\usepackage{algpseudocode}
\usepackage{graphicx}
\usepackage{enumerate}
\usepackage{float}
\usepackage{subcaption}
\usepackage{mathtools}

\DeclareMathOperator{\diam}{diam}

\DeclareMathOperator{\N}{\mathbb{N}}
\DeclareMathOperator{\R}{\mathbb{R}}
\DeclareMathOperator{\Part}{\mathcal{P}}

\let\Re\undefined
\DeclareMathOperator{\Re}{\text{Re}}
\let\Im\undefined
\DeclareMathOperator{\Im}{\text{Im}}

\DeclarePairedDelimiter{\ceil}{\lceil}{\rceil}

\newtheorem{theorem}{Theorem}[section]
\newtheorem{lemma}[theorem]{Lemma}
\newtheorem{corollary}[theorem]{Corollary}
\theoremstyle{definition}
\newtheorem{definition}[theorem]{Definition}

\newtheorem{fact}[theorem]{Fact}

\newtheorem{question}[theorem]{Question}

\newtheorem*{acknowledgement*}{Acknowledgement}
\theoremstyle{remark}
\newtheorem{remark}[theorem]{Remark}
\numberwithin{equation}{section}

\begin{document}
	
	\title{H\"older functions with nonporous graphs}
	\author[rvt]{Claudio A. DiMarco}
	\address[rvt]{1 Lomb Memorial Drive, School of Mathematics and Statistics, Rochester Institute of Technology, Rochester, NY 14623, USA}
	\ead{cdsma@rit.edu}
	\begin{keyword}
		H\"older class, porous set, Assouad dimension, Lipschitz class, doubling measure. \\
		\MSC[2020]{Primary 28A75, 26B35, 26A16; Secondary 28A35}
	\end{keyword}
	\date{\today}

\begin{abstract}
For any $\alpha \in (0,1)$ there is a H\"older continuous function $h:\R \rightarrow \R$ with nonporous graph $Gr(h) \subset \mathbb{R}^2$ which is thin for doubling measures.  Construction of these functions requires analysis of a known example with nonporous graph and bounded variation on the unit interval.
\end{abstract}

\maketitle

\section{Introduction}

Certain subsets $E$ of a metric space $X$ are ``thin" or ``fat" for doubling measures.  Wu initiated the discussion of thinness for doubling measures on $\R$ in \cite{Wu}, wherein the relationship between thinness and ``porosity" was also investigated. The results in \cite{Wu} employ a generalized notion of porosity which easily implies thinness for doubling measures, as Chen and Wen noted in \cite[page 9]{ChenWen}.  In \cite{DiMarco} we proved that the graph of any Zygmund function on $\R^d$ is porous and therefore thin in $\R^{d+1}$, where some of the following discussion was included.

In \cite{Ojala2} the authors considered the case when $X$ is uniformly perfect and $E \subset X$ has the property $\mu(E) = 0$ for every doubling measure $\mu$ on $X$, or the property $\mu(E) > 0$ for all doubling measures. Such subsets are called thin and fat, respectively.  Ojala et al provided sufficient conditions for certain ``cut-out sets" being thin or fat, and proved that $E \subset X$ is thin if and only if $E$ is quasisymmetrically null, i.e. $\mathcal{H}^q(f(E)) = 0$ whenever $f:X \rightarrow Y$ is quasisymmetric and $Y$ is Ahlfors $q$-regular, where $\mathcal{H}^q$ is $q$-dimensional Hausdorff measure \cite{Ojala2}.  

It is interesting to consider the special case when $E = Gr(f) \subset \R^{d+1}$ is the graph of a continuous function $f$.  This is an area of recent interest in which relatively little is understood.  For a while, an open question in this area was whether every rectifiable curve in the plane is thin for doubling measures. Surprisingly, this turned out not to be the case (see \cite{Garnett}).  However, if attention is restricted to isotropic doubling measures on $\R^{d+1}$, then indeed $Gr(f)$ has measure zero for any continuous $f:[0,1]^d \rightarrow [0,1]$, as seen in \cite{ChenWen}.  This is easily extended to include all continuous $f: \R^d \rightarrow \R$.

\subsection{H\"older continuity and porosity}
Recall that a bounded function $f: \R^d \rightarrow \R$ is \textit{H\"older continuous} if there are $0<\alpha \leq 1$ and $C>0$ such that $|f(x) - f(y)| \leq C|x-y|^{\alpha}$ for all $x,y \in \R^d,$ and is called \textit{Lipschitz} if there is $L>0$ such that $|f(x) - f(y)| \leq L|x-y|$ for all $x,y \in \R^d$.  Clearly $f$ is also $\alpha'$-H\"older when it is $\alpha$-H\"older, for any $\alpha' \in (0, \alpha)$.  We only consider $\alpha \leq 1$ because $f$ is constant if $\alpha > 1$, which is readily seen by noting $f' = 0$.  In \cite{Ojala} the authors showed that $Gr(f)$ need not be thin in general, even for continuous $f$.  They remark that there are few known sufficient conditions for thinness. In particular, it is known that the graph of a Lipschitz function is thin, but whether this property extends to H\"older functions remains unknown \cite[Question 1.2]{Ojala}.

In \cite{Goffman} Goffman constructed a continuous function on $[-1,1]$ consisting of a sequence of spikes, which has bounded $p$-variation for $p \geq 1$.  We provide a more detailed description of this function and generalize it to obtain a family $\mathcal{H}$ (defined in Section \ref{main_section}) of real-valued functions on the unit interval with similar properties.  It is easy to see $Gr(h)$ is thin for any $h \in \mathcal{H}$ because it is locally Lipschitz by construction.

For any $h \in \mathcal{H}$, $Gr(h)$ is thin yet nonporous.  The existence of such functions demonstrates that not every H\"older function has a porous graph, answering a question posed in \cite{DiMarco}.  Moreover, for any prescribed $\alpha' \in (0, 1)$ there is $h \in \mathcal{H}$ that is $\alpha'$-H\"older but not $\alpha$-H\"older for any $\alpha > \alpha'$.  We adopt the following definition of porosity found in \cite{ChenWen}.
\begin{definition}\label{porosity_def1}
    A subset $E$ of a metric space $X$ is called \textit{porous} if there exists $a \in (0,1)$ such that for all $x\in E$ and $r>0$, there is a ball $B(y,ar) \subset B(x,r)$ with $B(y, ar) \cap E = \varnothing$.
\end{definition}
\noindent In \cite{Goffman} porosity is defined pointwise using oriented cubes (with sides parallel to the coordinate axes in $\R^d$) instead of balls, where $Q_x$ denotes the oriented cube centered at $x$ with Lebesgue measure $|Q_x|$.
\begin{definition}\label{porosity_def2}
    Let $E\subset \R^d, ~x \in E$, and define 
    \begin{equation*}
        \sigma(Q_x) = \sup \{|Q_y| : Q_y\subset Q_x \text{ and } Q_y \cap E = \varnothing\}.
    \end{equation*}
    The set $E$ is \textit{porous at} $x$ if 
    \begin{equation}\label{porous_at_x}
        s(x) = \limsup_{|Q_x| \rightarrow 0} \frac{\sigma(Q_x)}{|Q_x|} > 0.
    \end{equation}
\end{definition}
Observe that the full porosity of $E$ (in the sense of Definition \ref{porosity_def1}) implies local porosity in the sense of Definition \ref{porosity_def2}.  Unsurprisingly porosity can be defined in terms of cubes instead of balls, so we make that transition in the proof of the following fact, and in Lemma \ref{cubes_lemma} which is used to prove the main result.

\newpage
\begin{fact}
If $E \subset \R^d$ is porous, then it is porous at every $x \in E$.
\end{fact}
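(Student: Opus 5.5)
The plan is to pass from balls to cubes by inscribing and circumscribing. Fix $x \in E$ and let $a \in (0,1)$ be the porosity constant from Definition \ref{porosity_def1}; we work with the Euclidean metric on $\R^d$. Let $Q_x$ be the oriented cube centered at $x$ with side length $\ell$. The Euclidean ball $B(x,\ell/2)$ is contained in $Q_x$, since each coordinate of a point in the ball differs from that of $x$ by less than $\ell/2$. Applying Definition \ref{porosity_def1} with $r = \ell/2$ produces a ball $B(y, a\ell/2) \subset B(x,\ell/2) \subset Q_x$ with $B(y,a\ell/2) \cap E = \varnothing$.

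Next we inscribe a cube in this empty ball. The oriented cube $Q_y$ centered at $y$ with side length $s = a\ell/\sqrt{d}$ has half-diagonal $\sqrt{d}\, s/2 = a\ell/2$. Shrinking slightly to handle the boundary (for example taking side $s(1-\varepsilon)$, or using the open ball and a closed cube of side just below $s$), we get $Q_y \subset B(y,a\ell/2)$. Hence $Q_y \subset Q_x$ and $Q_y \cap E = \varnothing$, so
\begin{equation*}
\sigma(Q_x) \geq \frac{a^d \ell^d (1-\varepsilon)^d}{d^{d/2}}.
\end{equation*}
Letting $\varepsilon \to 0$ in the supremum gives
\begin{equation*}
\frac{\sigma(Q_x)}{|Q_x|} \geq \frac{a^d}{d^{d/2}}
\end{equation*}
for every cube $Q_x$ centered at $x$, regardless of its size.

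Since this lower bound is uniform in $\ell$, letting $|Q_x| \to 0$ yields $s(x) \geq a^d d^{-d/2} > 0$. This is \eqref{porous_at_x}, so $E$ is porous at $x$, and $x \in E$ was arbitrary.

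The only delicate point is the open/closed boundary convention: whether balls and cubes are open or closed, and whether $Q_y \subset Q_x$ requires strict containment. The $\varepsilon$-shrinking handles this, since it costs only a constant factor. If the metric in Definition \ref{porosity_def1} were instead taken to be the sup-norm, balls would already be cubes and the bound would simply be $a^d$.
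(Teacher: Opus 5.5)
Your proof is correct and follows essentially the same route as the paper. Both arguments inscribe a Euclidean ball in the cube at $x$, use porosity to get an empty ball inside it, and inscribe a cube in that empty ball to bound $\sigma(Q_x)/|Q_x|$ from below. Your constant $a^d d^{-d/2}$ is the correct one in general dimension, whereas the paper's displayed computation squares the side lengths, giving $a^2/d$, which is only literally valid for $d=2$.
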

\begin{proof}
    Assume Definition \ref{porosity_def1} is satisfied and let $x \in E$.  Let $a>0$ be as in Definition \ref{porosity_def1}, and let $Q_x(n)$ denote the cube centered at $x$ with side length $2/n.$  Then there is a ball $B(y_n, a/n) \subset B(x, 1/n)$ such that $B(y_n, a/n) \cap E = \varnothing$, and the cube $Q_{y_n}$ inscribed in the ball $B(y_n, a/n)$ has 
    \[
    |Q_{y_n}| = \left( \frac{2a}{n\sqrt{d}} \right)^2 = \frac{4a^2}{n^2 d}
    \]
    Therefore $\sigma(Q_x(n)) \geq 4a^2/(n^2 d)$ and hence
    \[
    s(x) \geq \frac{\sigma(Q_x(n))}{|Q_x(n)|} \geq \frac{4a^2/(n^2 d)}{4/n^2} = a^2/d > 0. \qedhere
    \]
\end{proof}

\begin{lemma}\label{cubes_lemma}
    Let $E \subset \R^d$.  Suppose there is a constant $C>0$ and a sequence of cubes $Q_n \subset \R^d$, each centered at some $x_n \in E$, such that for any cube $Q\subset (Q_n \setminus E), ~l(Q) \leq (C/n)l(Q_n)$.  Then $E$ is not a porous subset of $\R^d$.
\end{lemma}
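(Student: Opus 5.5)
We argue by contradiction. Suppose $E$ is porous, and let $a\in(0,1)$ be the constant from Definition \ref{porosity_def1}. Throughout, $l(\cdot)$ denotes side length, and the norm on $\R^d$ is Euclidean. The plan is to turn a hole ball inside the inscribed ball of $Q_n$ into a hole cube. This cube will have side length comparable to $a\,l(Q_n)$, independently of $n$, which contradicts the hypothesis once $n$ is large.

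First, fix $n$. Let $r_n = l(Q_n)/2$, so that the ball $B(x_n, r_n)$ is inscribed in $Q_n$, i.e. $B(x_n,r_n)\subset Q_n$. Since $x_n\in E$, porosity gives a ball $B(y_n, a r_n)\subset B(x_n,r_n)\subset Q_n$ with $B(y_n,ar_n)\cap E=\varnothing$. Next, inscribe an oriented cube $Q$ centered at $y_n$ in this ball; its diagonal is $2ar_n$, so
\[
l(Q) = \frac{2ar_n}{\sqrt d} = \frac{a}{\sqrt d}\, l(Q_n).
\]
To be safe about open versus closed balls and cubes, I would shrink slightly, say to side length $\frac{a}{2\sqrt d}\,l(Q_n)$. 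Then $Q\subset B(y_n,ar_n)\subset Q_n$, and $Q\cap E=\varnothing$, so $Q\subset Q_n\setminus E$.

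By hypothesis, this gives $\frac{a}{2\sqrt d}\, l(Q_n)\le \frac{C}{n}\, l(Q_n)$. Dividing by $l(Q_n)>0$ yields $n\le 2C\sqrt d/a$. Since the sequence $(Q_n)$ is indexed by all $n\in\N$, choosing $n> 2C\sqrt d/a$ gives a contradiction. Hence $E$ is not porous.

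The only delicate point is the bookkeeping of containments: the inscribed ball must lie inside $Q_n$, and the cube must lie inside the hole ball. The factor $\sqrt d$ handles the second containment, and the harmless shrinking avoids any boundary issues. We also need $l(Q_n)>0$, which holds because the $Q_n$ are genuine cubes. There is no real obstacle; this is essentially the converse direction of the argument in the Fact above.
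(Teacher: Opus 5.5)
Your proof is correct and follows the same route as the paper: inscribe a ball in $Q_n$, apply porosity at the center $x_n$ to get a hole ball of radius $a r_n$, inscribe a cube in that ball, and contradict $l(Q)\le (C/n)\,l(Q_n)$ for large $n$. Your bookkeeping is cleaner than the paper's: you take $r_n = l(Q_n)/2$, use side $2ar_n/\sqrt d$, and shrink the cube to handle open versus closed sets, whereas the paper implicitly takes $d=2$ and writes $l(Q_n)=r_n$ instead of $2r_n$, a harmless constant slip.
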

\begin{proof}
    Suppose $E$ is porous with constant $a>0$ such that any ball $B(x,r)$ with $x\in E$ contains a ball $B(y, ar)$ that does not meet $E$.  Let $B_n$ be the ball inscribed in $Q_n$.  By porosity of $E$, we may choose $z_0 \in \R^d$ such that $B_n' = aB_n + z_0 \subset (B_n \setminus E)$.  The cube $Q_n'$ inscribed in $B_n'$ has the property
    \begin{equation}\label{cubes_no_porosity}
        l(Q_n') \leq (C/n)l(Q_n).
    \end{equation}
    Denote $r_n', r_n$ the radii of $B_n', B_n$ respectively, so that $r_n' = ar_n$ and $l(Q_n') = \sqrt{2}r_n' = \sqrt{2}a r_n.$  Also $l(Q_n) = r_n$, so by \eqref{cubes_no_porosity}
    \begin{equation*}
        \sqrt{2}a r_n \leq (C/n) r_n,
    \end{equation*}
    which fails for large $n$.
\end{proof}

\subsection{Connection between porosity and Assouad dimension}
Porosity is closely related to the notion of Assouad dimension, as Fraser discussed in \cite[pp. 10, 13, 73-74]{Fraser}, so we include some relevant details.  Definition \ref{porosity_def1} says that a porous set $F \subset \R^d$ has uniformly large holes at all locations and scales.  The definition and geometric interpretation of Assouad dimension are nontransparent.  There are many equivalent versions; for example, the following is found in \cite[p.10]{Fraser}.
\begin{definition}\label{Assouad_dim_def}
    The \textit{Assouad dimension} of a non-empty set $F\subset \R^d$ is
    \begin{equation}
      \begin{split}
        \dim_A F = \inf \Bigl\{ \alpha : \; & \text{there exists a constant } C > 0 \text{ such that,} \\
        & \text{for all } 0 < r < R \text{ and } x \in F, \\
        & N_r(B(x, R) \cap F) \leqslant C\!\left(\frac{R}{r}\right)^{\!\alpha} \Bigr\},
      \end{split}
    \end{equation}
    where $N_r(E)$ is the smallest number of open sets required for an $r$-cover of a bounded set $E$.
\end{definition}

Suppose we can cover any $R$-ball in a set $F$ with a constant multiple (independent of $x, r, R$) of $(R/r)^{\alpha}$ many $r$-balls. This is natural since if $F$ is $\mathbb{R}^d$ then one readily covers any $R$-ball by at most $5 \cdot 2^d (R/r)^d$ many $r$-balls. This becomes harder to satisfy as $\alpha$ decreases, and the Assouad dimension is simply the infimum of $\alpha$ such that the condition is satisfied \cite[p.13]{Fraser}.  Geometrically, $F \subset \R^d$ has $\dim_A F < n$ means that $F$ is significantly less diffuse than balls $B(z,R) \subset \R^d$ at all locations $z\in F$ and scales $R$, and hence $F\cap B(z, R)$ is easier to cover than $B(z,R)$ with smaller balls of radius $r.$

Luukkainen proved the following result \cite[Theorem 5.2]{Luukkainen2} which clearly implies that $F \subset \R^d$ is porous if and only if $\dim_A F < d$. 
\begin{theorem}[Luukkainen]
    There is a constant $\varepsilon(a, d) >  0$ depending only on $a$ and $d$ such that if $F \subset \R^d$ is porous with constant $a$, then 
    \begin{equation*}
        \dim_A F \leq d - \varepsilon(a, d).
    \end{equation*}
\end{theorem}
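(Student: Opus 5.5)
The plan is a covering argument at a single scale, iterated across scales. Fix $x \in F$ and $0<r<R$. Since the estimate is trivial up to a constant when $R/r$ is bounded, I assume $R/r$ is large. First I will make the following standard reduction: if every ball $B(x,R)$ with $x\in F$ can be covered by at most $M$ sets of diameter $R/k$ centred near $F$, where $k$ is a fixed integer, $M < k^d$, and both depend only on $a,d$, then iterating gives $N_{R/k^m}(B(x,R)\cap F) \le C M^m$. Choosing $m$ with $k^{-m}R \approx r$ gives $N_r \le C'(R/r)^{\log M/\log k}$, so $\dim_A F \le \log M/\log k = d - \varepsilon$ with $\varepsilon = d - \log M/\log k>0$.

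The single-scale step is where porosity enters. Tile the cube of side $2R$ containing $B(x,R)$ by $k^d$ subcubes of side $2R/k$, and choose $k$ so large, depending on $a$ and $d$, that $2R\sqrt d/k < aR/2$. The porosity hole $B(y,aR)\subset B(x,R)$ misses $F$ and has radius $aR$. It therefore contains at least one whole subcube, in fact roughly $c(a,d)k^d$ of them, and none of these meets $F$. Discarding them leaves at most $M = k^d - 1$ subcubes that meet $B(x,R)\cap F$, each of diameter $\lesssim R/k$. For the iteration I will only use subcubes that meet $F$. Each such subcube lies in a ball $B(x',2\sqrt d R/k)$ with $x'\in F$, and the argument is repeated there.

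The main obstacle is the bookkeeping in this recursion. The covering pieces at each level must be balls centred on $F$, because porosity only applies at points of $F$. Replacing a subcube by such an enlarged ball introduces a constant factor, and that factor must not compound in a way that destroys $M<k^d$. To handle this, I will run the recursion on the balls themselves: cover $B(x',\rho)\cap F$ by $N(\rho/k)$-balls centred in $F$, with the count bounded by $k^d$ minus the number of cells swallowed by the hole. Because the hole swallows a fixed positive proportion $c(a,d)$ of the cells, a dimensional overlap factor (at most $3^d$) is still beaten once $k$ is large. Equivalently, I will work with the dyadic-type tree of $k$-adic cubes: every cube meeting $F$ at level $m$ has at most $k^d-1$ children meeting $F$, provided $k$ is large. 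This holds because a point of $F$ in the cube, together with porosity at radius comparable to the side length, yields an empty child cube. Counting level-$m$ cubes meeting $B(x,R)$ then gives at most $2^d (k^d-1)^m$ such cubes, and this closes the argument with $\varepsilon(a,d) = d - \log(k^d-1)/\log k$.
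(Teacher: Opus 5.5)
The paper gives no proof of this statement (it only quotes Luukkainen's Theorem 5.2), so your argument has to stand on its own. Your final version, the $k$-adic tree, is the standard proof and is sound once repaired. The ball recursion you call ``equivalent'', however, does not work. The reduction in your first paragraph is fine. What fails is the claim that a single hole per step delivers its hypothesis $M<k^d$, and in particular the sentence that a fixed proportion $c(a,d)$ beats the overlap factor once $k$ is large.

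To cover $B(x',\rho)\cap F$ by balls of radius $\rho/k$ centred in $F$, you need cells of diameter at most $\rho/k$. There are about $(2\sqrt d\,k)^d$ of them in the cube of side $2\rho$, and one hole of radius $a\rho$ removes only a fraction $c(a,d)\lesssim a^d$ of them. The exponent produced is therefore $d+\log\bigl((2\sqrt d)^d(1-c)\bigr)/\log k$. This exceeds $d$ for \emph{every} $k$ once $a$ is small. The gain from one hole and the loss from re-centring are both $k$-independent factors incurred at every step, and the loss wins.

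Your single-scale step hides the same loss: its $k^d-1$ cells have diameter $2\sqrt d R/k$, so the true scale ratio is $k/(2\sqrt d)$, not $k$. The tree succeeds for a different reason. Its cubes are nested, so nothing is ever re-centred or enlarged. The factor $2^d$ is paid once at the top level, while the factor $1-k^{-d}$ compounds over the levels. Drop the ball recursion and present only the tree.

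In the tree lemma, ``a point of $F$ in the cube'' is not enough. If $x\in F\cap Q$ is near $\partial Q$, the hole that porosity provides inside $B(x,l(Q))$ can lie entirely outside $Q$, emptying a child of a neighbouring cube instead. Use the central child. Let $s=l(Q)$, take $k\ge 3$ odd with $k>3\sqrt d/a$, and let $Q_0$ be the central child of $Q$.
\begin{itemize}
\item Either $Q_0\cap F=\varnothing$, and you are done.
\item Or some $x\in F\cap Q_0$ exists. Then $B(x,s/3)\subset Q$, and porosity gives $B(y,as/3)\subset Q\setminus F$. The child containing $y$ has diameter $\sqrt d\,s/k<as/3$, so it misses $F$.
\end{itemize}
For the count, choose the top side $s_0=k^{-j}\in[2R,2kR)$, so that $B(x,R)$ meets at most $2^d$ top cubes. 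Take $m$ minimal with $\sqrt d\,s_0k^{-m}\le r$. This gives $k^m<2\sqrt d\,k^2R/r$, and hence
$N_r(B(x,R)\cap F)\le 2^d(k^d-1)^m\le C(k,d)(R/r)^{\log(k^d-1)/\log k}$.
That is your claimed bound, with $k=k(a,d)$ and $\varepsilon(a,d)=d-\log(k^d-1)/\log k$.
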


The Assouad spectrum was introduced by Fraser and Yu in \cite{FY}; it is a one-parameter family of metrically defined dimensions which interpolates between the upper box-counting dimension and the (quasi-)Assouad dimension \cite{CT}.  Recently Chrontsios-Garitsis and Tyson studied how the $\alpha$-H\"older condition on a function $f:[a,b] \rightarrow \R$ influences bounds for the Assouad spectra of $Gr(f)$.  They provided upper bounds on the regularized Assouad spectra of $Gr(f)$, and an algorithm to construct a related function $\Tilde{f}:[a,b] \rightarrow \R$ that realizes these upper bounds \cite[Theorems 1.1 and 1.2]{CT}.  

\subsection{Organization of the paper}
The graph of Goffman's function, constructed explicitly in Section \ref{goffman_section}, is indeed nonporous in the sense of Definition \ref{porosity_def1}, but this property is achievable with a less complicated function, as demonstrated via construction of the family $\mathcal{H}$ in Section \ref{main_section}. 

Section \ref{main_section} contains the main result: we prove that H\"older continuity can be achieved for any prescribed exponent $\alpha \in (0,1)$ by a member of our simplified family $\mathcal{H}$, and demonstrate the lack of porosity of $Gr(h)$ for every member of $\mathcal{H}$.  H\"older continuity follows by application of a common lemma to a standard sum of ``tent functions" on the unit interval, and we disprove porosity via a lemma that echoes the definition of Assouad dimension.

Section \ref{examples_section} contains examples.  It is shown that Goffman's function is H\"older continuous with exponent $\alpha = 1/6$ due to the behavior of the ratio \\$|f(x) - f(y)|/|x-y|^{\alpha}$ on the increasingly steep segments of the graph.  We also provide a small family of Goffman-type functions $\mathcal{H}_{sm}$ that share the zig-zag construction but include different spike heights and frequencies, attaining all possible H\"older exponents in $(0, 1/4].$  For sake of variety we provide a constructive proof that the sparser (in comparison to the original Goffman function) curves $Gr(h), ~h \in \mathcal{H}_{sm}$, are nonporous, from which it is intuitively obvious that the graph of the original Goffman function also lacks porosity.

\begin{remark}
    The families $\mathcal{H}$ and $\mathcal{H}_{sm}$ are both introduced because their members are constructed differently (the former with tents, the latter with zig-zags), which results in different behavior.  To attain any prescribed H\"older exponent in $(0,1)$ we use $\mathcal{H}$, while $\mathcal{H}_{sm}$ only provides exponents in $(0, 1/4]$, hence ``small."
\end{remark}

\section{Preliminaries}
Most of the content surrounds functions $f: I \rightarrow \R$ where $I \subset \R$ represents a closed interval of positive length $|I|$ or $I = \R$.  Given a cube $Q\subset \R^2$ we express the side length as $l(Q)$, and all cubes are assumed oriented so that the sides are parallel to the coordinate axes unless otherwise stated.  We also write $l(S)$ to denote the length of a line segment $S \subset \R^2$.  By ``length" we intend the standard metric on $\R^d$, and write $|z|$ for the Euclidean norm of $z \in \R^d$.  

The graph of $f:A \rightarrow B$ is denoted $Gr(f) = \{(x, f(x))~|~ x\in A\}$.  The statement $a \lesssim b$ means there is a constant $C>0$ such that $a \leq Cb$, and the value of $C$ is not important.  We write $a \approx b$ when $a$ is comparable to $b$, that is $cb \leq a \leq c'b$ for some irrelevant constants $c, c' >0$.  The support of a real-valued function $g$ is denoted $supp ~g$.  As usual $\pi_i(x_1, \dots, x_d) = x_i$ for $(x_1, \dots, x_d) \in \R^d$, $i = 1, \dots, n$.

The symbols $B(x,r)$ and $\bar{B}(x,r)$ denote open and closed balls of radius $r$ respectively.  A metric space is called \textit{doubling} if there is $C_1 \geq 1$ so that every set of diameter $d$ in the space can be covered by $C_1$ sets of diameter at most $d/2$ \cite[page 81]{Heinonen}.  Every complete doubling metric space supports a doubling measure \cite{Luukkainen}.  The following definition can be found in \cite[page 3]{Heinonen}:
\begin{definition}\label{doubling_def}
	A Borel measure $\mu$ on a metric space $(X,d)$ is called \textit{doubling} if there is $C>0$ such that $\mu(\bar{B}(x, r)) \leq C \mu(\bar{B}(x, r/2))$ for all nonempty closed balls $\bar{B}(x,r)$.
\end{definition}
A doubling metric space that supports a doubling measure $\mu$ is often denoted $(X, d, \mu)$.  Given such a space, the doubling property  of $\mu$ ensures that the measure is not disproportionately concentrated around any particular point $x\in X$.
\begin{definition}
	A subset $E$ of a metric space $(X,d)$ is \textit{thin} if $\mu(E)=0$ for every doubling measure $\mu$ on $X$.
\end{definition}

\section{Goffman's function}\label{goffman_section}
Working only to the right of the origin in $\R^2$, we reconstruct Goffman's example to provide clarity, along with insight for generalization.  The curves $y = \pm \sqrt{x}$ on $[0,1]$ are used as guides for the construction of the desired function $f:[0,1] \rightarrow \R$.  Define $f(0)= 0$; for $n \in \N$ define $I_n$ as follows, and note the length of $I_n$, written $|I_n|.$
\begin{equation}
    \begin{split}\label{def_In}
        I_n &= \left[ \frac{1}{n+1}, \frac{1}{n} \right] \text{  and  } f\left(\frac{1}{n+1}\right) = f\left(\frac{1}{n}\right) = 0 \\
        |I_n| &= 1/(n(n+1))
    \end{split}
\end{equation}

Within each $I_n$ we construct $2n$ `spikes' while keeping the function continuous.  To this end, consider the uniform partition $\Part_n$ of $I_n$ into $n$ subintervals, each to host two spikes:
\begin{equation}\label{partition_of_In}
    \Part_n = \left\{ \frac{1}{n+1} + \frac{m|I_n|}{n} \right\}_{m=0}^n = \left\{ \frac{1}{n+1} + \frac{m}{n^2 (n+1)} \right\}_{m=0}^n
\end{equation}
Denote the $m$th subinterval of $I_n$ by $I_{n,m}$, so that 
\begin{equation}
    \begin{split}\label{defInm}
        I_{n,m} &= \left[ \frac{1}{n+1} + \frac{m}{n^2 (n+1)}, \frac{1}{n+1} + \frac{m+1}{n^2 (n+1)} \right], ~~m = 0, 1, \dots,  n-1. \\
        |I_{n,m}| &= \frac{|I_n|}{n} = \frac{1}{n^2(n+1)}
    \end{split}
\end{equation}

For each $m$, define $f$ on $I_{n,m}$ as follows.  From left to right, beginning at the $x$-axis, we will connect line segments to form a sequence of spikes.  We wish the first spike to point upward (above the $x$-axis) and the second downward.  To this end, consider the uniform partition of $I_{n,m} = [\alpha, \beta]$ consisting of five points: $\Part_{nm}=\{a_1, a_2, a_3, a_4, a_5\}$.  Define the following values of $f$ to mark the three anchor points (lying on the $x$-axis) and two tips of the two spikes in $[\alpha, \beta]$, respectively.
\[ 
\begin{split}
f(a_1) &= f(a_3) = f(a_5) = 0 \\
f(a_2) &= \frac{1}{\sqrt{n+1}}, ~f(a_4) = \frac{-1}{\sqrt{n+1}}.
\end{split}
\]
Connect the point $(a_i, f(a_i))$ to $(a_{i+1}, f(a_{i+1}))$ with a line segment for each $i = 1, 2, 3, 4$.  Drawing these segments for each $I_{n,m}$ completes the function's definition on $[0,1]$.  The first five generations of the graph are shown in Figure \ref{goffman_func}.

\begin{figure}[H]
    \centering
    \includegraphics[width=1\linewidth]{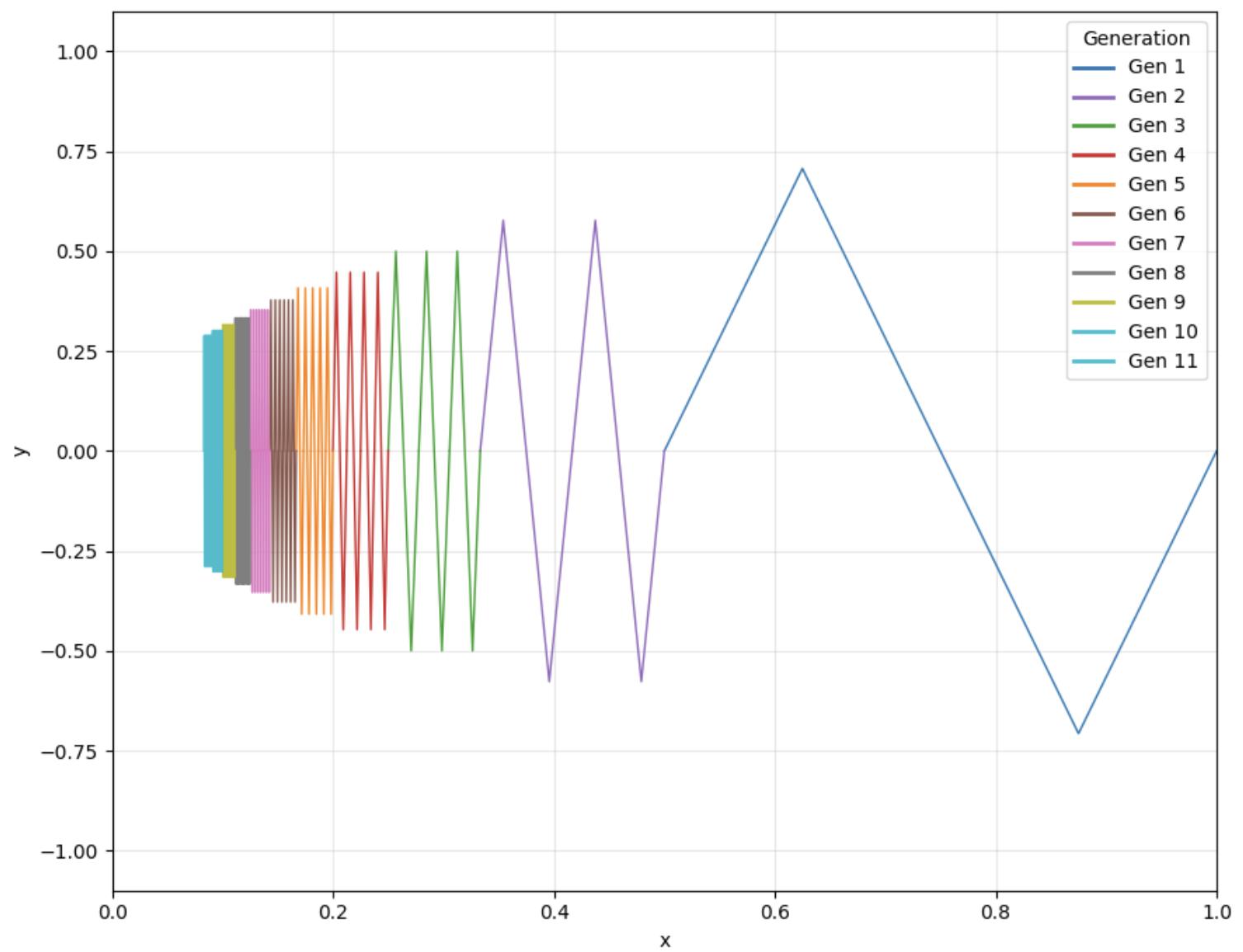}
    \caption{The first several generations of Goffman's function $f$ on $[0,1]$ with $2n$ spikes ($n$ zig-zags) defined on each $I_n$.}
    \label{goffman_func}
\end{figure}

\begin{remark}
    Goffman states that the graph of $f$ is nonporous with focus on the origin $(0,0)$, but the argument is concluded ``by studying the graph," and no graph is provided.  Also, in the definition of $I_{m,n}$, the length of $I_{m,n}$ should be 
    \[ |I_{n,m}| = 1/(n^2 (n+1))\] 
    because $I_n$ has length $1/(n(n+1))$.
\end{remark}

\section{Main result}\label{main_section}
These iteratively constructed functions are routinely viewed as power series built from continuous piecewise linear functions $g: \R \rightarrow \R$ supported on $[0,1]$, e.g. tent functions.  Such $g$ are Lipschitz and hence $\alpha$-H\"older for every $\alpha \in (0,1]$.  To build an $\alpha$-H\"older function $h:[0,1] \rightarrow \R$, for each $j \in \N$ compress the graph of $g$, translate the supported portion to the interval $(a_j, b_j)$, and rescale as desired.  The following lemma allows us to join such curves in a favorable manner.

\begin{lemma}\label{sum_spikes_lemma}
Fix $\alpha \in (0, 1]$ and suppose that $g : \mathbb{R} \to \mathbb{R}$ is an $\alpha$-H\"older function with $\text{supp } g \subset [0, 1]$. Suppose also that the intervals $[a_j, b_j]$, $j = 1, 2, \ldots$ have disjoint interiors, and the sequence $\{c_j\}$ satisfies $|c_j| \leq M(b_j - a_j)^\alpha$ for some fixed $M > 0$. Then the function
\begin{equation}\label{sum_of_spikes}
    h(x) = \sum_{j=1}^{\infty} c_j g\left(\frac{x - a_j}{b_j - a_j}\right)
\end{equation}
is $\alpha$-H\"older continuous.
\end{lemma}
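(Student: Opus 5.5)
Write $g_j(x) = c_j\, g\bigl((x-a_j)/(b_j-a_j)\bigr)$ and $\ell_j = b_j - a_j$. Each $g_j$ is supported in $[a_j,b_j]$, and these intervals have disjoint interiors, so at every point at most one summand is nonzero (at a shared endpoint all summands vanish, since $g(0)=g(1)=0$ by continuity and support). Hence $h$ is well defined. Let $C_g$ be the H\"older constant of $g$. Since $g$ vanishes at $0$, we also have $\|g\|_\infty \le C_g$. Two basic estimates drive the proof. First, for $x,y$ in the same $[a_j,b_j]$,
\[
|g_j(x)-g_j(y)| \le |c_j|\, C_g \frac{|x-y|^\alpha}{\ell_j^\alpha} \le M C_g |x-y|^\alpha .
\]
This is just the scaling, and it is the reason for the hypothesis $|c_j|\le M\ell_j^\alpha$. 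Second, each $g_j$ is $MC_g$-H\"older on all of $\R$, since $g$ is H\"older on $\R$. Note also the pointwise bound $|g_j(x)| = |g_j(x)-g_j(a_j)| \le MC_g|x-a_j|^\alpha$, and similarly with $b_j$ in place of $a_j$.

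Now take arbitrary $x<y$. If $h(x)$ and $h(y)$ are both zero there is nothing to prove. If exactly one of $x,y$ lies in the interior of some $[a_j,b_j]$, say $x$, then $h(y)=0$ and $y\ge b_j$. Using the endpoint bound,
\[
|h(x)-h(y)| = |g_j(x)| \le MC_g|b_j-x|^\alpha \le MC_g|y-x|^\alpha .
\]
If both lie in the same interval, apply the first estimate. The remaining case is $x\in(a_j,b_j)$ and $y\in(a_k,b_k)$ with $j\ne k$, so that $b_j\le a_k$. I will insert the zeros at $b_j$ and $a_k$:
\[
|h(x)-h(y)| \le |g_j(x)-g_j(b_j)| + |g_k(a_k)-g_k(y)| \le MC_g\bigl(|b_j-x|^\alpha + |y-a_k|^\alpha\bigr).
\]
Both distances are at most $|y-x|$, so this gives $|h(x)-h(y)| \le 2MC_g|x-y|^\alpha$. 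Boundedness of $h$ follows from $|h|\le \sup_j |c_j|\,\|g\|_\infty$, which is finite provided the $\ell_j$ are bounded. That holds when the intervals lie in a fixed bounded set, as in the intended application on $[0,1]$. Otherwise I would only claim the H\"older inequality itself.

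The only step needing care is the cross-interval case, and the trick of passing through the endpoint zeros handles it with constant $2MC_g$. No summation over intermediate intervals is needed, because the intervals between $x$ and $y$ contribute nothing to $h(x)-h(y)$. I do not expect a genuine obstacle beyond checking that $g(0)=g(1)=0$. This follows from continuity of $g$ together with $\operatorname{supp} g\subset[0,1]$.
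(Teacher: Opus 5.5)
Your proof is correct and is essentially the paper's three-case argument: scaling within a single interval, and passing through the zeros of $h$ at the endpoints $a_j,b_j$ otherwise. Your remark that boundedness of $h$ requires the lengths $b_j-a_j$ to be bounded is a fair point that the paper does not address. In the cross-interval case your constant $2MC_g$ is correct, whereas the paper's claimed constant $MC$ is not: the paper uses $s^\alpha+t^\alpha\le(s+t)^\alpha$, which is the reverse of subadditivity and fails for $\alpha<1$. Concavity only gives $s^\alpha+t^\alpha\le 2^{1-\alpha}(s+t)^\alpha$, and two adjacent tents of opposite sign, compared at their peaks, show that the factor $2^{1-\alpha}$ cannot be dropped.
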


\begin{proof}
By assumption there exists a constant $C$ such that
\begin{equation}\label{g_holder_prop}
    |g(x) - g(y)| \leq C|x - y|^\alpha \quad \forall x, y \in \mathbb{R}.
\end{equation}
We claim that
\begin{equation}
    |h(x) - h(y)| \leq MC|x - y|^\alpha \quad \forall x, y \in \mathbb{R}.
\end{equation}

\begin{enumerate}
    \item[\textit{Case 1:}] Both $x$ and $y$ are in the same interval $[a_j, b_j]$. Then \eqref{sum_of_spikes} and \eqref{g_holder_prop} imply
    \[
    |h(x) - h(y)| \leq |c_j| C \left(\frac{|x - y|}{b_j - a_j}\right)^\alpha \leq MC|x - y|^\alpha
    \]
    since only the $j$-th term of \eqref{sum_of_spikes} is nonzero on $[a_j, b_j]$.

    \item[\textit{Case 2:}] $x \in [a_m, b_m]$ and $y \in [a_j, b_j]$ where we may assume $b_m \leq a_j$. Notice $g(1) = g(0) = 0$, so Case 1 yields $|h(x)| = |h(x) - h(b_m)| \leq MC|x - b_m|^\alpha$. Similarly, $|h(y)| = |h(y) - h(a_j)| \leq MC|y - a_j|^\alpha$. Using the triangle inequality and the subadditivity of the function $t \mapsto t^\alpha$, we obtain
    \begin{equation*}
        \begin{split}
            |h(x) - h(y)| &\leq |h(x)| + |h(y)| \\
            &\leq MC(|x - b_m| + |y - a_j|)^\alpha \\
            &\leq MC|x - y|^\alpha,
        \end{split}
    \end{equation*}
    where the last step is based on the fact that $x \leq b_m \leq a_j \leq y$.

    \item[\textit{Case 3:}] One or both of $x, y$ is outside of $\bigcup_j [a_j, b_j]$.  If both $x$ and $y$ are outside of $\bigcup_n [a_j, b_j]$ then $h(x) = h(y) = 0$.  If $x \in [a_j, b_j]$ for some $j$ and $y$ lies outside all of these intervals, then $h(y) = 0 = h(a_j) = h(b_j)$.  There are two possibilities: $x < y$ and $x \geq y$.  If $x < y$ then $|h(x) - h(y)| = |h(x) - h(b_j)| \leq MC|x - b_j|^{\alpha} \leq MC|x-y|^{\alpha}.$  Similarly, for $x \geq y$ replacing $b_j$ with $a_j$ yields the same result. \qedhere
\end{enumerate}
\end{proof}

Adjusting Goffman's approach to accommodate any H\"older exponent $\alpha \in (0,1)$, we will employ tents with stakes at $1/n^q, ~q\in \N$, instead of zig-zags staked at $1/n$.  Proof of the main result relies on partitioning a small interval $[0,r]$ in which each subinterval must contain at least two stakes (one tent).  The following lemma demonstrates this desired behavior.
\begin{lemma}\label{two_in_one}
    For $n \in \N$ and $q > 0$ let $x_n = 1/n^q$.  For every $N \in \N$ there is $L \in \N$ such that $n \geq L$ implies 
    \[ 
    \frac{x_n}{N} > 2(x_n - x_{n+1}).
    \]
\end{lemma}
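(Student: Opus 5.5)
We need to show that $x_n - x_{n+1} = o(x_n)$, i.e. $(x_n - x_{n+1})/x_n \to 0$. Once that is established, the statement follows at once: given $N$, choose $L$ so that $(x_n - x_{n+1})/x_n < 1/(2N)$ for all $n \ge L$. Multiplying by $2x_n > 0$ then gives $2(x_n - x_{n+1}) < x_n/N$.

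To compute the ratio, write
\[
\frac{x_n - x_{n+1}}{x_n} = 1 - \frac{n^q}{(n+1)^q} = 1 - \left(1 + \frac{1}{n}\right)^{-q}.
\]
Since $q > 0$ and $t \mapsto (1+t)^{-q}$ is continuous at $t = 0$ with value $1$, this tends to $0$ as $n \to \infty$. It is also positive, because $x_n$ is strictly decreasing.

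If an explicit $L$ is preferred, use the mean value theorem on $t \mapsto t^{-q}$. It gives $x_n - x_{n+1} = q\,\xi^{-q-1}$ for some $\xi \in (n, n+1)$, hence
\[
x_n - x_{n+1} \le \frac{q}{n^{q+1}} = \frac{q}{n}\, x_n .
\]
It then suffices that $2q/n < 1/N$, so we may take any integer $L > 2qN$.

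No step here is a real obstacle. The only point needing care is the direction of the mean value bound: because $t^{-q-1}$ is decreasing, the left endpoint $n$ gives the upper bound $\xi^{-q-1} \le n^{-q-1}$. The argument uses only $q > 0$, so it covers the case $q \in \N$ used in the construction.
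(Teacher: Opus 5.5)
Your proof is correct and follows the paper's route: the mean value theorem gives $x_n - x_{n+1} \le q/n^{q+1} = (q/n)\,x_n$, so any integer $L > 2qN$ works (the paper takes $L = \lceil 2qN+1 \rceil$). Your caution about the direction of the bound is well placed: the paper's intermediate inequality $x_n - x_{n+1} \le q/(n+1)^{q+1}$ actually goes the wrong way, although its final bound $q/n^{q+1}$, which is all that is used, is correct.
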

\begin{proof}
    By the mean value theorem 
    \[
    x_n - x_{n+1} \leq \frac{q}{(n+1)^{q+1}} \leq \frac{q}{n^{q+1}}.
    \]
    For $n > 2qN$ we have
    \[
    \frac{x_n - x_{n+1}}{x_n} \leq \frac{q/(n^{q+1})}{1/n^q} = \frac{q}{n} < \frac{1}{2N},
    \]
    so $L = \ceil{2qN + 1}$ is sufficiently large.
\end{proof}

To disprove porosity of $Gr(h)$ for certain continuous $h$, we draw inspiration from the definition of Assouad dimension (Definition \ref{Assouad_dim_def}), employing balls of the form $B(0, r_n/n) \subset B(0, r_n) \subset \R$.

\begin{lemma}\label{r_cubes_lemma}
    Let $h:[0,1] \rightarrow \R$ be continuous and suppose that for every $n \in \N$, there is $r_n > 0$ such that 
    \[h\left( \left[ \frac{(j-1)r_n}{n}, \frac{jr_n}{n} \right] \right) \supset [0,r_n] ~~\text{for}~~ j = 1, \dots, n.\]
    Then $Gr(h)$ is nonporous.
\end{lemma}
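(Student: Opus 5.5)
Assume for contradiction that $Gr(h)$ is porous with constant $a\in(0,1)$, and work with squares, as in Lemma \ref{cubes_lemma}. Two features of the hypothesis drive the argument. First, $r_n\le 1$, since $[0,r_n/n\cdot n]=[0,r_n]\subset[0,1]$ is the domain where the condition is imposed. Second, $h$ takes the value $0$ on $[0,r_n/n]$, so it vanishes somewhere near the origin.

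\textbf{Choice of center and ball.} Pick $t_n\in[0,r_n/n]$ with $h(t_n)=0$, and set $p_n=(t_n,0)\in Gr(h)$. Consider the ball $B(p_n,2r_n)$. Since $t_n\le r_n/n\le r_n$, the ball contains the square $S_n=[t_n,t_n+r_n]\times[0,r_n]$ provided we shrink slightly, e.g.\ to $[t_n,t_n+r_n/2]\times[0,r_n/2]$, which lies within distance $r_n/\sqrt{2}<2r_n$ of $p_n$. Porosity gives a ball $B(y,2ar_n)\subset B(p_n,2r_n)$ disjoint from $Gr(h)$. The difficulty is that this ball might sit outside $S_n$, for instance below the axis or to the left. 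To handle this, I will apply the hypothesis on the whole range $[0,r_n]$ rather than only the right half.

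\textbf{The core estimate.} Every vertical strip $[(j-1)r_n/n,\,jr_n/n]\times\R$ with $j=1,\dots,n$ meets $Gr(h)$ at every height in $[0,r_n]$, because $h$ attains every such value there. Hence any point $(x,y)$ with $x\in[0,r_n]$ and $y\in[0,r_n]$ lies within distance $r_n/n$ of $Gr(h)$. So any hole $B(z,\rho)$ whose center lies in $[0,r_n]^2$ must satisfy $\rho\le r_n/n$, since the ball of radius $\rho$ about $z$ contains a graph point horizontally within $r_n/n$ of $z$ at the same height.

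\textbf{Main obstacle.} For a ball centered at an arbitrary graph point, the hole need not lie in this quadrant. I plan to avoid this by choosing a better center and using the freedom in the porosity definition. Take the center $p_n'=(r_n/2,\,r_n/2)$ on the graph, which is possible because $h$ attains $r_n/2$ on the middle strip $j\approx n/2$; adjust the first coordinate within $r_n/n$. Then use radius $r_n/4$: the ball $B(p_n',r_n/4)$ lies inside $[0,r_n]^2$ up to an $O(r_n/n)$ shift, which is harmless because $r_n/4+r_n/n<r_n/2$ for $n\ge 5$. Porosity then gives a hole of radius $ar_n/4$ centered inside $[0,r_n]^2$. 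By the core estimate, $ar_n/4\le r_n/n$, which fails once $n>4/a$. This contradiction shows that $Gr(h)$ is nonporous. Equivalently, the argument verifies the hypothesis of Lemma \ref{cubes_lemma}, with $Q_n$ the square of side $r_n/2$ centered at $p_n'$ and $C=2$ or so.
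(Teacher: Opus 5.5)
Your argument is correct and is essentially the paper's: the hypothesis makes $Gr(h)$ $r_n/n$-dense in $[0,r_n]^2$, so every hole inside that square has size $O(r_n/n)$, which contradicts porosity at scale comparable to $r_n$. The paper packages this through Lemma \ref{cubes_lemma} with $Q_n=[0,r_n]^2$ and holes of side at most $2r_n/n$; your re-centering at a graph point $(s_n,r_n/2)$ with $|s_n-r_n/2|\le r_n/n$ is a worthwhile refinement, since the center $(r_n/2,r_n/2)$ of $[0,r_n]^2$ need not lie on $Gr(h)$ as Lemma \ref{cubes_lemma} requires, and the exploratory paragraph about $p_n=(t_n,0)$ is unused and can be dropped.
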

\begin{proof}
    Put $Q_n = [0,r_n]^2$ and note that any (and hence the largest) sub-cube $Q_n' \subset Q_n$ that does not meet $Gr(h)$ has side length $l(Q_n') \leq 2r_n/n = 2l(Q_n)/n$.  Therefore porosity is untenable in light of Lemma \ref{cubes_lemma}.
\end{proof}

\begin{theorem}
    For any $\alpha \in (0,1)$ there is an $\alpha$-H\"older function $h:\R \rightarrow \R$ such that $Gr(h)$ is nonporous.
\end{theorem}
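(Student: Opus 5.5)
We build $h$ as a sum of tents via Lemma \ref{sum_spikes_lemma} and show that $Gr(h)$ is nonporous via Lemma \ref{r_cubes_lemma}.

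\textbf{Construction.} Fix $\alpha\in(0,1)$ and choose $q\in\N$ with $q>\alpha/(1-\alpha)$. Then
\[
\beta := \frac{(q+1)\alpha}{q} < 1 .
\]
Let $x_n=1/n^q$, and let $g$ be the standard tent: $g(0)=g(1)=0$, $g(1/2)=1$, linear in between, and $g=0$ off $[0,1]$. Then $g$ is Lipschitz, hence $\alpha$-H\"older on $\R$. Set $[a_n,b_n]=[x_{n+1},x_n]$ and $c_n=(x_n-x_{n+1})^\alpha$, and define $h$ by \eqref{sum_of_spikes}. The intervals have disjoint interiors and $|c_n|\le 1\cdot(b_n-a_n)^\alpha$. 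So Lemma \ref{sum_spikes_lemma} gives that $h:\R\to\R$ is $\alpha$-H\"older. Note that $h$ vanishes outside $(0,1)$ and $h(0)=0$; the main theorem only asks for $\alpha$-H\"older, so nothing sharper is needed here.

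\textbf{Verifying Lemma \ref{r_cubes_lemma}.} Lemma \ref{r_cubes_lemma} is stated for functions on $[0,1]$, so we apply it to $h|_{[0,1]}$; nonporosity of the graph then persists for $h$ on $\R$, since the cubes $Q_n=[0,r_n]^2$ lie near the origin. Fix $N\in\N$. We must find $r=r_N>0$ such that for each $j=1,\dots,N$, the interval $J_j=[(j-1)r/N,\,jr/N]$ satisfies $h(J_j)\supset[0,r]$. It suffices that each $J_j$ contains a full interval $[x_{n+1},x_n]$ with tent height $c_n\ge r$. Indeed, $h$ is continuous, equals $0$ at $x_{n+1}$, and equals $c_n$ at the midpoint, so by the intermediate value theorem $h(J_j)\supset[0,c_n]\supset[0,r]$.

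\textbf{Choosing $r$.} The factor $2$ in Lemma \ref{two_in_one} is designed for this step. Apply Lemma \ref{two_in_one} with $N$ to obtain $L$. Take $r\le x_L$ small, so that every stake $x_n\le r$ has $n\ge L$. Consider any interval $J\subset[0,r]$ of length $r/N$ whose right endpoint $t$ satisfies $t\ge r/N$. Let $x_n$ be the largest stake $\le t$; then $n\ge L$. Lemma \ref{two_in_one} gives $x_n-x_{n+1}<x_n/(2N)\le r/(2N)$, so it suffices to know that $t-x_n$ is small. More precisely, the gaps between consecutive stakes in $[0,r]$ are smaller than $r/(2N)$. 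Hence among the stakes within $J$ there are at least two consecutive ones, $x_{n+1}<x_n$, and $[x_{n+1},x_n]\subset J\cap[r/(2N),r]$.

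\textbf{Heights.} For such $n$ we have $x_n\ge r/(2N)$, so $n\lesssim (2N/r)^{1/q}$. By the mean value theorem, $x_n-x_{n+1}\approx q\,n^{-(q+1)}$. Therefore
\[
c_n \gtrsim n^{-(q+1)\alpha} \gtrsim \left(\frac{r}{2N}\right)^{\beta}.
\]
The requirement $(r/(2N))^\beta\ge C r$ is equivalent to $r^{1-\beta}\le C'(2N)^{-\beta}$. Since $\beta<1$, this holds for all sufficiently small $r$. Choosing $r_N$ this small gives the hypothesis of Lemma \ref{r_cubes_lemma}, and so $Gr(h)$ is nonporous.

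\textbf{Main obstacle.} The delicate point is the balance in the last step. The tents in the leftmost block $J_1=[0,r/N]$ are the shortest, yet must still reach height $r$. This is exactly what forces $q>\alpha/(1-\alpha)$. The argument also depends on shrinking $r$ with $N$ rather than fixing it, and the constants from the mean value theorem estimate must be tracked carefully.
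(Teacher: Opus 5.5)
Your proof is essentially the paper's. It uses the same tents on $[x_{n+1},x_n]$ with $x_n=n^{-q}$, gets H\"older continuity from Lemma~\ref{sum_spikes_lemma}, and gets nonporosity from Lemmas~\ref{two_in_one} and~\ref{r_cubes_lemma}. Your heights $(x_n-x_{n+1})^\alpha\approx x_n^{\beta}$ are, up to constants, the paper's $x_n^p$ with $p=\beta$ and $\varepsilon=1/q$. Your choice of $r$ through $r^{1-\beta}\lesssim N^{-\beta}$ is a more transparent version of the paper's choice of $K$ and $n$ in \eqref{good_n}.

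One step needs a small repair. Let $x_{m_0}$ be the largest stake $\le r$. Your bound on consecutive gaps controls $t-x_n$ only when $t\le x_{m_0}$.

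The problem is the last block $J_N=[(N-1)r/N,\,r]$, whose right endpoint is $t=r$. There the relevant distance is $r-x_{m_0}<x_{m_0-1}-x_{m_0}$. Since $x_{m_0-1}>r$, Lemma~\ref{two_in_one} only bounds this by $x_{m_0-1}/(2N)$, which exceeds $r/(2N)$. So $x_{m_0+1}\in J_N$ does not follow as written.

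The fix is to take $r$ to be a stake, $r=x_M$ with $M\ge L$ large enough for your height inequality; the paper does the same with $r=x_n$. Then for every right endpoint $t\le r$, the largest stake $x_n\le t$ satisfies $t-x_n<r/(2N)$. Either $t=r=x_M$, or $x_n\le t<x_{n-1}\le x_M$ with $n-1\ge L$, so the gap bound applies. The rest of your argument then goes through unchanged.
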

\begin{proof}
    First we construct $h$.  Let $0 < \varepsilon < 1, ~q = 1/\varepsilon,$ and $0 < p < 1$.  Put $x_n = 1/n^q, ~n \in \N$ and $[a_n, b_n] = [x_{n+1}, x_n]$.  Consider $g(x) = 2x \chi_{[0, 1/2]} + (2 - 2x)\chi_{(1/2, 1]}$ and note $Gr(g)$ forms a tent of height $1$ on $[0,1]$.  Put $h(x) = \sum_{n = 0}^{\infty} x_n^p g_n(x)$ where 
    \[
    g_n(x) = g\left( \frac{x - a_n}{b_n - a_n}\right).
    \]
    
    To establish H\"older continuity it suffices to show $h$ satisfies Lemma \ref{sum_spikes_lemma}.  To this end, note that
    \begin{equation*}
            \left(\frac{1}{n^q}\right)^{1 + \varepsilon} \leq \frac{1}{n^{q+1}} ~~\text{is equivalent to}~~ q \geq \frac{1}{\varepsilon},
    \end{equation*}
    from which it follows that
    \begin{equation}
        \begin{split}
            x_n^{1 + \varepsilon} &\leq \frac{1}{n^{q+1}} \leq 2^{q+1}(x_n - x_{n+1}) \\
            x_n^p &\leq \left[ 2^{q+1}(x_n - x_{n+1}) \right]^{p/(1 + \varepsilon)} \\
            &= 2^{pq} (x_n - x_{n+1})^{p/(1 + \varepsilon)}.
        \end{split}
    \end{equation}
    Because $\varepsilon, p \in (0,1)$ were chosen arbitrarily, the desired H\"older exponent $\alpha = p/(1 + \varepsilon)$ of $h$ is thus obtained via Lemma $\ref{sum_spikes_lemma}$ with $c_n = x_n^p$ and $M = 2^{pq}$.

    It remains to show $Gr(h)$ is nonporous.  Lemma \ref{two_in_one} says that for any $N \in \N$ we may choose $n$ large enough that subdividing $[0, x_n]$ into $N$ subintervals of equal length puts at least two elements of $\{x_m\}_{m=n}^{\infty}$ into each subinterval.  So for $j = 1, \dots, N$ there is at least one tent supported entirely on \[ F_j = [(j-1)x_n/N, jx_n/N].\]  We will show that there is a tent of height at least $x_n$ defined within $[0, x_n/N]$, from which it follows that there are tents of height at least $x_n$ defined within every $F_j.$

    Fix $N \in \N$.  Because $p < 1$ we may then choose $K$ large enough that all of the following conditions are met:
    \begin{enumerate}[(i)]
        \item $K \geq \ceil{2qN + 1}$
        \item $N/K^q < 1/K^{qp}$
        \item \begin{equation}\label{good_n}
        \frac{N}{K^q} < \frac{1}{n^q} < \frac{1}{K^{qp}}.
    \end{equation}
    \end{enumerate}
    Note that 
    \begin{equation*}
        \left|\frac{K}{N^{1/q}} - K^p \right| \rightarrow \infty ~~\text{as} ~~ K \rightarrow \infty,
    \end{equation*} so there is indeed an integer $n$ satisfying \eqref{good_n} for sufficiently large $K$.
    In particular $1/K^q < x_n/N$, and hence the tent on $[x_{K+1}, x_K] \subset [0, x_n/N] = F_1$ has height $x_K^p > x_n$.  Clearly $h(F_j)$ also contains $[0, x_n]$ for $j>1$ because $x_n$ is decreasing, so Lemma \ref{r_cubes_lemma} shows that $Gr(h)$ is nonporous.
\end{proof}
So $\mathcal{H} = \{h_{p, \varepsilon} ~|~ \varepsilon, p \in (0,1) \}$ is a two-parameter family of H\"older continuous functions $\R \rightarrow \R$, where $h_{p, \varepsilon}$ has exponent $\alpha = p/(1+\varepsilon)$, which is actually sharp.
\begin{fact}
    $h_{p, \varepsilon}$ is not $\alpha'$-H\"older for any $\alpha' > p/(1 + \varepsilon)$.
\end{fact}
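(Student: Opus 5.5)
The plan is to test the H\"older quotient on a single tent: compare the value of $h_{p,\varepsilon}$ at the apex of the $n$th tent with its value at the left stake of that tent, and show that the quotient $|h(x)-h(y)|/|x-y|^{\alpha'}$ is unbounded as $n\to\infty$ whenever $\alpha' > p/(1+\varepsilon)$.

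First I fix the two points. Let $y_n = x_{n+1} = a_n$ and let $z_n = (x_n + x_{n+1})/2$ be the midpoint of $[a_n,b_n]$. The intervals $[x_{n+1},x_n]$ have disjoint interiors, so only the $n$th term of the sum is nonzero on this interval. Since $g(0)=0$ we get $h(y_n)=0$, and since $g(1/2)=1$ we get $h(z_n) = x_n^p = n^{-qp}$. Hence
\[
|h(z_n)-h(y_n)| = n^{-qp}, \qquad |z_n - y_n| = \tfrac12 (x_n - x_{n+1}).
\]

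Next I bound the denominator from above, which is the direction needed here. The mean value theorem estimate already used in the proof of Lemma \ref{two_in_one} gives $x_n - x_{n+1} \le q/n^{q+1}$, so $|z_n-y_n|^{\alpha'} \le (q/2)^{\alpha'} n^{-(q+1)\alpha'}$. Therefore
\[
\frac{|h(z_n)-h(y_n)|}{|z_n-y_n|^{\alpha'}} \ \ge\ (2/q)^{\alpha'}\, n^{(q+1)\alpha' - qp} = (2/q)^{\alpha'}\, n^{\,q\left((1+\varepsilon)\alpha' - p\right)},
\]
where the last equality uses $q = 1/\varepsilon$, so that $(q+1)/q = 1+\varepsilon$.

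If $\alpha' > p/(1+\varepsilon)$, the exponent $q((1+\varepsilon)\alpha'-p)$ is positive, so the quotient tends to infinity as $n\to\infty$. Consequently no constant $C$ can satisfy $|h(x)-h(y)|\le C|x-y|^{\alpha'}$, and $h_{p,\varepsilon}$ is not $\alpha'$-H\"older. I do not expect a real obstacle. The only points needing care are to use the \emph{upper} bound on the tent width, which the mean value theorem supplies, and to note that the single-term localization makes $h(x_{n+1})=0$ exact.
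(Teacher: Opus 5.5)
Your proof is correct and is essentially the paper's argument: you compare the apex of the $n$th tent (height $n^{-pq}$) with its left stake at horizontal distance at most $q/n^{q+1}$, so the quotient grows like $n^{q((1+\varepsilon)\alpha'-p)}$. Your version is somewhat cleaner than the paper's, which literally writes $|h(x_n)-h(x_{n+1})|$ (this is $0$, since both points are stakes) and uses $\approx$ where you give the one-sided mean value bound that is actually needed.
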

\begin{proof}
    The tent defined on $[x_{n+1}, x_n] = [1/n^{q+1}, 1/n^q]$ has height $1/n^{pq}$. The endpoints of the line segment constituting the left hand side of this tent form the ratio
    \[
    \frac{|h_{p, \varepsilon}(x_n) - h_{p, \varepsilon}(x_{n+1})|}{|x_n - x_{n+1}|^{\alpha}} = \frac{\frac{1}{n^{pq}}}{\left( \frac{1}{n^q} - \frac{1}{(n+1)^q} \right)^{\alpha} } \approx \frac{n^{\alpha(q+1)}}{n^{pq}} = n^{\alpha(q+1) - pq},
    \]
    which is bounded if and only if $\alpha < p/(1 + \varepsilon)$ because $q = 1/\varepsilon.$
\end{proof}

\section{Examples}\label{examples_section}

\subsection{A small family of Goffman-type functions}
Goffman's zig-zag construction can be adjusted to accommodate taller spikes at a higher frequency, which of course results in ``less porous" graphs and smaller (maximal) H\"older exponents.   In fact, for any prescribed $\alpha \in (0,1/4)$, the height and frequency of spikes can be chosen to obtain an $\alpha$-H\"older Goffman-type function $h: [0,1] \rightarrow \R$ with nonporous $Gr(h)$.

Generally, for any $k \in \{0\} \cup \N$, a Goffman-type function can be defined with $n^k$ zig-zags on each $I_n$ by subdividing $I_n$ into $n^k$ subintervals $I_{n,m}$, $m = 0, 1, \dots, n^k - 1$.  Notice in this case 
\[ 
|I_{n,m}| = 1/(n^{k+1}(n+1)) ~~ \text{instead of} ~~ 1/(n^2(n+1)).
\]  
Taller spikes can be achieved for each $n$ using heights $c_{n,m} = 1/n^p$ for all $m$, where $p \in (0, 1/2]$ is constant.  Nothing is gained by using $n+1$ instead of $n$ in the definition of the height coefficients $c_{n,m}$, so we use $n$.

Goffman-type functions are readily viewed as power series built from the simple function $g: \R \rightarrow \R$ whose graph contains the three line segments connecting the points $(0,0), (1/4, 1), (1/2, 0), (3/4, -1), (1, 0).$  It contains a single ``zig-zag" with two spikes on the unit interval:
\begin{equation}\label{g_def}
    g(x) = 4x\chi_{[0, 1/4)} + (-4)(x - 1/2)\chi_{[1/4, 3/4)} + 4(x - 1)\chi_{[3/4, 1]},
\end{equation}
where $\chi_A$ is the characteristic function of $A \subset \R$. Lemma \ref{sum_spikes_lemma} can be used to complete the construction, but before we begin it is worth noting the following obvious observation.
\begin{fact}
    The function $g$ is 4-Lipschitz and therefore $\alpha$-H\"older for any $\alpha \in (0,1]$.
\end{fact}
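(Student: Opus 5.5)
Since $g$ is piecewise linear with three pieces, the plan is to bound its slope on each piece and then pass from a slope bound to the Lipschitz and H\"older conditions.

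\textbf{Step 1 (slopes).} On each of the intervals $[0,1/4)$, $[1/4,3/4)$ and $[3/4,1]$, the definition \eqref{g_def} gives $g$ as an affine function with slope $4$, $-4$ and $4$ respectively. Outside $[0,1]$ we have $g \equiv 0$, with slope $0$.

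\textbf{Step 2 (continuity).} Next we check that $g$ is continuous at the breakpoints $0,1/4,3/4,1$ by comparing one-sided values:
\begin{itemize}
\item at $0$: $g(0)=0$, which matches the zero extension on the left;
\item at $1/4$: $4\cdot\tfrac14=1$ and $-4(\tfrac14-\tfrac12)=1$;
\item at $3/4$: $-4(\tfrac34-\tfrac12)=-1$ and $4(\tfrac34-1)=-1$;
\item at $1$: $4(1-1)=0$, which matches the zero extension on the right.
\end{itemize}

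\textbf{Step 3 (Lipschitz bound).} Fix $x<y$. Split $[x,y]$ at the breakpoints lying between them. On each subinterval $g$ is affine with slope of absolute value at most $4$, so its change there is at most $4$ times the subinterval's length. Adding these pieces and using the triangle inequality gives $|g(x)-g(y)|\le 4|x-y|$. Continuity from Step 2 is what allows the pieces to telescope, with no jumps at the breakpoints.

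\textbf{Step 4 (H\"older bound).} For $|x-y|\le 1$ and $\alpha\in(0,1]$ we have $|x-y|\le |x-y|^\alpha$, so
\[
|g(x)-g(y)|\le 4|x-y|\le 4|x-y|^\alpha .
\]
For $|x-y|>1$ we use instead that $|g|\le 1$ everywhere, so
\[
|g(x)-g(y)|\le 2\le 4|x-y|^\alpha .
\]
Hence $g$ is $\alpha$-H\"older with constant $4$ for every $\alpha\in(0,1]$.

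There is no real obstacle. The only point needing care is the continuity check at the breakpoints in Step 2, since the telescoping in Step 3 relies on it.
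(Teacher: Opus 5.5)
Your proof is correct, and it is exactly the argument the paper leaves implicit when it calls this an ``obvious observation'': slopes $\pm 4$ together with continuity at the breakpoints give the $4$-Lipschitz bound. Your Step 4 also makes explicit something the paper glosses over, namely that the Lipschitz-to-H\"older step for $|x-y|>1$ needs the boundedness $|g|\le 1$, not the Lipschitz bound alone.
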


\begin{definition}\label{goffman_family}
    Fix $k \in \N$ and $p \in (0, 1/2]$.  For each $n \in \N$ and $m = 0, 1, \dots, n^k-1$, let $I_{n,m} = [a_{n,m}, b_{n,m}]$ where 
    \begin{equation}
        I_{n,m} = \left[ \frac{1}{n+1} + \frac{m}{n^{k+1} (n+1)}, \frac{1}{n+1} + \frac{m+1}{n^{k+1} (n+1)} \right], ~~ |I_{n,m}| = \frac{1}{n^{k+1}(n+1)}.
    \end{equation}
    For each $n,m$ put
    \begin{equation}
        g_{n,m}(x) = g\left( \frac{x - a_{n,m}}{b_{n,m} - a_{n,m}} \right), ~~c_{n,m} = 1/n^p.
    \end{equation}
    where $g$ is as in \eqref{g_def}.  Then we have a family of Goffman-type functions $\mathcal{H}_{sm}$ with members $h_{k,p}: \R \rightarrow \R$ defined by
    \begin{equation*}
        h_{k, p}(x) = \sum_{n=1}^{\infty} \sum_{m=0}^{n-1} c_{n,m} g_{n,m}(x).
    \end{equation*}
\end{definition}

\begin{theorem}
    Each $h_{k,p} \in \mathcal{H}_{sm}$ is $\alpha$-H\"older continuous for $\alpha \leq p/(k+2)$, and this inequality is sharp.
\end{theorem}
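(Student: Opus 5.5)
The plan is to deduce the Hölder bound directly from Lemma \ref{sum_spikes_lemma}, as was done for $\mathcal{H}$, and then to obtain sharpness from the ratio along a single rising segment of a spike, as in the Fact following the main theorem. Throughout, I read the inner sum in Definition \ref{goffman_family} as running over $m = 0, 1, \dots, n^k-1$, so that every subinterval $I_{n,m}$ of $I_n$ carries one zig-zag. This matches the construction described just before the definition.

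\emph{Step 1 (hypotheses of Lemma \ref{sum_spikes_lemma}).} The function $g$ in \eqref{g_def} is $4$-Lipschitz with $\text{supp } g \subset [0,1]$. Since $|g(x)-g(y)| \le 4|x-y| \le 4|x-y|^\alpha$ whenever $|x-y|\le 1$, and $g$ is bounded, $g$ is $\alpha$-Hölder for every $\alpha \in (0,1]$. The intervals $I_{n,m}$ have pairwise disjoint interiors, because they partition the intervals $I_n$, which themselves overlap only at endpoints. Reindexing the double sum as a single sum over $j$ puts $h_{k,p}$ exactly in the form \eqref{sum_of_spikes}. The point $0$ and all points outside $(0,1]$ lie outside every $I_{n,m}$, and these are handled by Case 3 of that lemma.

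\emph{Step 2 (the coefficient condition).} Since $n+1 \le 2n$, we have
\[
|I_{n,m}| = \frac{1}{n^{k+1}(n+1)} \ge \frac{1}{2n^{k+2}}.
\]
Hence
\[
c_{n,m} = n^{-p} = \bigl(n^{-(k+2)}\bigr)^{p/(k+2)} \le \bigl(2|I_{n,m}|\bigr)^{p/(k+2)} .
\]
For $\alpha \le p/(k+2)$ we use $|I_{n,m}|\le 1$ and $2|I_{n,m}| \le 1$ to get $(2|I_{n,m}|)^{p/(k+2)} \le (2|I_{n,m}|)^{\alpha} \le 2|I_{n,m}|^\alpha$. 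So $|c_{n,m}| \le M |I_{n,m}|^\alpha$ holds with $M = 2$, and Lemma \ref{sum_spikes_lemma} gives that $h_{k,p}$ is $\alpha$-Hölder.

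\emph{Step 3 (sharpness).} Take $x = a_{n,m}$ and $y = a_{n,m} + |I_{n,m}|/4$, the base and tip of the upward spike. Then $|h_{k,p}(x)-h_{k,p}(y)| = n^{-p}$ and $|x-y| = |I_{n,m}|/4 \approx n^{-(k+2)}$. This gives
\[
\frac{|h_{k,p}(x)-h_{k,p}(y)|}{|x-y|^{\alpha'}} \approx n^{\alpha'(k+2)-p},
\]
which is unbounded as $n\to\infty$ whenever $\alpha' > p/(k+2)$. So no larger exponent is possible.

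I do not expect a genuine obstacle. The only points needing care are the index convention in Definition \ref{goffman_family} and keeping the comparability constants uniform in $n$ and $m$. The latter holds because $|I_{n,m}|$ does not depend on $m$.
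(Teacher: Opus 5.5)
Your proposal is correct and follows the paper's proof. The paper likewise applies Lemma \ref{sum_spikes_lemma} directly, noting that $c_{n,m}/|I_{n,m}|^{\alpha} = [n^{k+2}+n^{k+1}]^{\alpha}/n^{p}$ is bounded exactly when $\alpha \le p/(k+2)$, and it obtains sharpness from the difference quotient along a single segment of a spike. Your version is slightly more careful than the paper's in two places: you make the constant $M=2$ explicit, and you use the rising segment, with rise $n^{-p}$ over run $|I_{n,m}|/4$, whereas the paper pairs the middle segment's drop $2/n^{p}$ with the run $|I_{n,m}|/4$ instead of $|I_{n,m}|/2$ (harmless for the order of growth).
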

\begin{proof}
    Lemma \ref{sum_spikes_lemma} can be applied directly:
    \begin{equation}\label{holder_proof}
            \frac{|c_{n,m}|}{(b_{n,m} - a_{n,m})^{\alpha}}= \frac{[ n^{k+2} + n^{k+1}]^{\alpha}}{n^p}
    \end{equation}
    is bounded if and only if $\alpha \leq p/(k + 2).$  To see the inequality is sharp, consider for each $n$ a pair of endpoints $(x_n,h_{k,p}(x)),(y_n, h_{k,p}(y))$ of a line segment in the graph of $h_{k,p}$, say $x_n,y_n \in I_{n,m}$.  Then
    \begin{equation*}
    \begin{split}
        \frac{|h_{k,p}(y) - h_{k,p}(x)|}{|y-x|^{\alpha}} = \frac{(2/n^p)}{(|I_{n,m}|/4)^{\alpha}} 
        &= \frac{2^{2\alpha + 1}[n^{k+1}(n+1)]^{\alpha}}{n^p},
    \end{split}
    \end{equation*}
    which is unbounded as $n \rightarrow \infty$ when $\alpha > p/(k+2).$
\end{proof}


\subsection{Thinness}
It is known that the graph of any Zygmund function $\R^d \rightarrow \R$ is thin for doubling measures, which includes the Lipschitz class.  It is unknown whether all H\"older continuous functions have thin graphs. Thinness was proven for Zygmund graphs via porosity, but we now see that that approach generally fails for H\"older functions.

\begin{fact}
    $Gr(h_{k,p})$ is thin for all $h_{k,p} \in \mathcal{H}_{sm}$.
\end{fact}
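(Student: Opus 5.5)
The plan is to split $Gr(h_{k,p})$ into a single point and a countable union of Lipschitz graphs, and to show each piece is null for every doubling measure. The introduction already asserts that the graph of any $h\in\mathcal{H}$ is thin because it is locally Lipschitz; the same reasoning applies here.

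First, locate the non-Lipschitz behaviour. By Definition \ref{goffman_family}, on each $I_n=[1/(n+1),1/n]$ the function $h_{k,p}$ is a finite sum of rescaled copies of the $4$-Lipschitz function $g$. These copies have disjoint interiors, so $h_{k,p}|_{I_n}$ is piecewise linear with finitely many pieces, each of slope at most $4c_{n,m}/|I_{n,m}|$, and hence it is Lipschitz. Outside $[0,1]$ we have $h_{k,p}\equiv 0$. Therefore
\[
Gr(h_{k,p}) = \{(0,0)\}\cup Gr\bigl(h_{k,p}|_{(-\infty,0]}\bigr)\cup Gr\bigl(h_{k,p}|_{[1,\infty)}\bigr)\cup \bigcup_{n\ge1} Gr\bigl(h_{k,p}|_{I_n}\bigr).
\]
This is a countable union consisting of the origin and graphs of Lipschitz functions defined on closed intervals.

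Second, show each piece is thin. For a Lipschitz function $f$ on a closed interval $J$, extend it by McShane extension to a Lipschitz function $\tilde f$ on all of $\R$. The graph of $f$ is a subset of $Gr(\tilde f)$, and that graph is thin by the known result quoted in the introduction: Lipschitz graphs, and more generally Zygmund graphs, are porous and hence thin. For the single point, note that every doubling measure on $\R^2$ is nonatomic, so $\mu(\{(0,0)\})=0$. To see nonatomicity, observe that $\R^2$ is uniformly perfect: each small ball around $x$ contains disjoint balls at comparable scale missing $x$. Iterating the doubling inequality then forces $\mu(\bar B(x,r))\to0$ geometrically as $r\to0$. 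Alternatively, one can observe that a point is porous and invoke the porous-implies-thin fact.

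Finally, countable subadditivity gives $\mu(Gr(h_{k,p}))=0$ for every doubling $\mu$, which is the statement.

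The main obstacle is that $h_{k,p}$ is not globally Lipschitz. Its slopes on $I_n$ grow like $n^{k+2-p}$, and the graph is not porous near the origin, so the porosity argument cannot be applied to the whole graph at once. The decomposition above avoids this by isolating the single bad point $(0,0)$. The only genuinely needed extra input is that singletons are null for doubling measures, and I would verify this directly from Definition \ref{doubling_def}.
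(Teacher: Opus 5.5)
Your proof is correct and is essentially the paper's argument: cover $Gr(h_{k,p})$ by countably many Lipschitz graphs, then conclude by countable subadditivity and the known thinness of Lipschitz graphs (the paper uses the individual rescaled zig-zags $c_{n,m}g_{n,m}$ on all of $\R$, whereas you group by $I_n$ and extend by McShane). Your separate nonatomicity argument for $(0,0)$ is valid but unnecessary, since that point already lies on $Gr\bigl(h_{k,p}|_{(-\infty,0]}\bigr)$, which is a piece of the $x$-axis.
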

\begin{proof} Observe that $g_{n,m}$ as defined in \eqref{g_def} is piecewise linear with four non-horizontal segments $L_1, L_2, L_3, L_4$.  Also
\[ |g_{n,m}'(x)| = \frac{2/n^p}{|I_{n,m}|/4} = \frac{8n^{k+1}(n+1)}{n^p} = q_n \] on the interior of each $\pi_1(L_i)$, so $g_{n,m}$ is $q_n$-Lipschitz.  Let $E$ and $E_{n,m}$ denote the graphs of $h_{k,p}$ and $g_{n,m}$ respectively.  Then for any doubling measure $\mu$ on $\R^2$,
\[ \mu(Gr(h_{k,p})) = \mu\left( \bigcup_n \bigcup_m Gr(g_{n,m}) \right) \leq \sum_n \sum_m \mu(Gr(g_{n,m})) = 0. \qedhere\]
\end{proof}

\subsection{Lack of porosity}
We now offer a constructive alternative to Lemma \ref{r_cubes_lemma} to disprove porosity.  To see that every member of $\mathcal{H}_{sm}$ has nonporous graph, it is unnecessary to consider more than one zig-zag on each $I_n$.  With only one zig-zag the spikes remain sufficiently crowded to preclude porosity near the origin.  To this end, consider $Gr(h_{0,1/2})$, $h_{0,1/2} \in \mathcal{H}_{sm}$, which has only two spikes (one zig-zag) on each $I_n$ of height $1/\sqrt{n}$.  We construct a sequence of cubes $Q_n$, each enclosing a portion of $Gr(h_{0,1/2})$, which demonstrates that porosity is untenable in light of Lemma \ref{cubes_lemma}.


\begin{figure}
    \centering
    \includegraphics[width=0.75\linewidth]{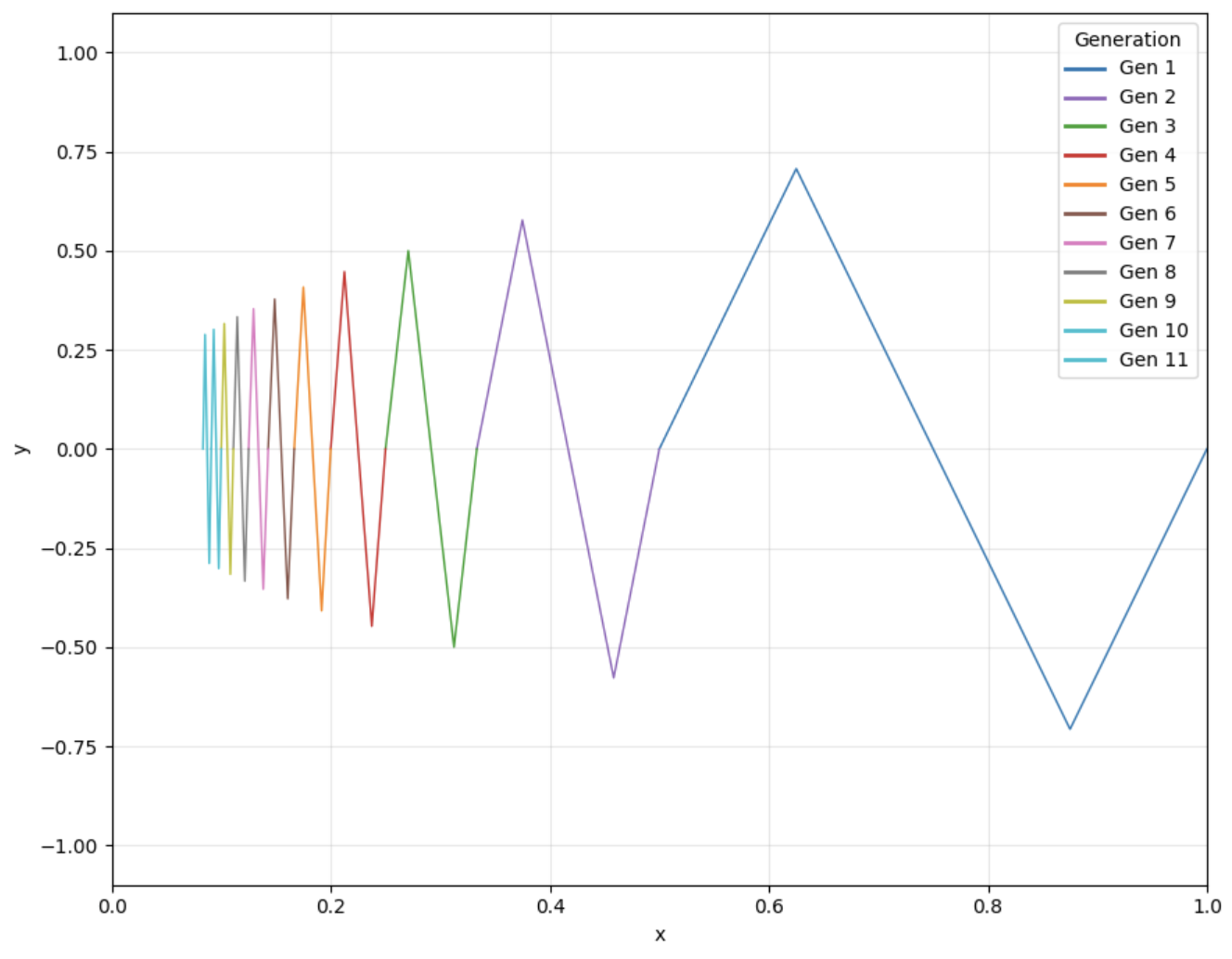}
    \caption{Simplified version $h_{0, 1/2} \in \mathcal{H}_{sm}$ of Goffman's function with only two spikes (one zig-zag) per interval $I_n$, which is nonporous.}
    \label{simplified_func}
\end{figure}

\begin{theorem}
    $Gr(h)$ is nonporous, where $h = h_{0, 1/2} \in \mathcal{H}_{sm}$.
\end{theorem}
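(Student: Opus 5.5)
We plan to apply Lemma \ref{cubes_lemma} with cubes centered at the origin $(0,0)\in Gr(h)$. Here $h = h_{0,1/2}$ has exactly one zig-zag on each $I_n = [1/(n+1), 1/n]$. On $I_n$ the graph rises to height $1/\sqrt{n}$ at the first quarter point, returns to $0$ at the midpoint, and descends to $-1/\sqrt{n}$ at the third quarter point. Hence
\[
h(I_n) \supset [-1/\sqrt{n}, 1/\sqrt{n}],
\]
and the graph over $I_n$ crosses every horizontal line $y=c$ with $|c|\le 1/\sqrt{n}$.

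For each large $N$ we take $Q_N$ to be the square centered at the origin of side length $2r$, with $r = r_N$ chosen small. The key observation is that the spikes near $0$ are much taller than the gaps between them. For $x \in (0,r]$, the interval $I_n$ containing $x$ has length about $x^2$, while its spikes have height about $\sqrt{x} \gg r$ once $r$ is small. So every interval $I_n$ with $I_n\subset [0,r]$ has $h(I_n)\supset[-r,r]$, as soon as $1/\sqrt n \ge r$, which is automatic since $1/n\le r<1$ gives $1/\sqrt n\ge 1/n$. The lengths $|I_n| = 1/(n(n+1)) \le r^2$ are tiny compared with $r$. The left half of $Q_N$, where $x<0$, must be handled by extending $h$ to be odd or symmetric. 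Alternatively, we can use the one-sided version as in Lemma \ref{r_cubes_lemma}, taking $Q_N = [0,r]^2$ with corner at the origin. That changes the normalization only by a constant, and the proof of Lemma \ref{cubes_lemma} adapts, since porosity gives a hole in $B(0,r)$ and a square hole sitting in the quadrant can be arranged with comparable size. We will follow the paper's convention in Lemma \ref{r_cubes_lemma} and work with $[0,r]^2$.

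Next we bound the empty subcubes. Let $Q'\subset [0,r]^2\setminus Gr(h)$ be a square with projection $[u,u+s]$ onto the $x$-axis and $[v,v+s]\subset[0,r]$ onto the $y$-axis. If $[u,u+s]$ contains a full $I_n$, then the graph over $I_n$ crosses the level $y=v+s/2\in[0,r]$ inside $Q'$, which is a contradiction. Hence $[u,u+s]$ meets at most two consecutive intervals $I_n$, so
\[
s \le 2|I_n| \lesssim u_0^2 \le r^2,
\]
where $u_0 \le r$ is the relevant location. Choosing $r_N = 1/N$ then gives $l(Q')\le C r_N^2 = (C/N)\, l(Q_N)$, which is exactly the hypothesis of Lemma \ref{cubes_lemma}. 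Therefore $Gr(h)$ is nonporous.

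The main obstacle is bookkeeping rather than depth. First, near $x=r$ the interval $I_n$ containing $r$ may stick out of $[0,r]$, so we must check that an empty square straddling it is still small. We do this by noting that $I_n$ has length at most about $r^2$ and that its neighbor inside $[0,r]$ already blocks. Second, we must reconcile cubes centered at the origin with the one-sided domain. The most direct fix is to verify the porosity contradiction directly: porosity would give a ball $B(y,ar)\subset B(0,r)$ missing the graph. Such a ball contains a square of side comparable to $ar$ lying in $\{x>0\}$, because for $x\le 0$ the function $h$ is $0$ or undefined, and the graph part $\{(x,0):x\le0\}$, if $h$ is extended by $0$, forces the hole off the negative axis within a constant factor. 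That square contradicts the bound $s\lesssim r^2$ once $r$ is small.
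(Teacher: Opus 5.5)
Your core mechanism is the same as the paper's: spikes of height much larger than $r$, spaced $O(r^2)$ apart, leave only $O(r^2)$-sized empty squares in a side-$r$ square near the origin. However, two steps fail as written.

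First, the claim that every $I_n\subset[0,r]$ satisfies $h(I_n)\supset[-r,r]$ is false. The condition $1/\sqrt n\ge r$ means $n\le 1/r^2$, and the inequality $1/\sqrt n\ge 1/n$ you cite does not yield it. For $x<r^2$ the spikes are shorter than $r$. In fact $|h(t)|\le 1/\sqrt n\le\sqrt{2t}$ for $t\in I_n$, so the square $[0,r^2/16]\times[r/2,\,r/2+r^2/16]$ misses $Gr(h)$ even though its projection contains infinitely many $I_n$. Your ``at most two consecutive intervals'' step is therefore wrong. The bound itself survives once you split off $[0,r^2]$. Every $I_n\subset[r^2,r]$ has $1/\sqrt n>r\ge v$, so $[u,u+s]\cap[r^2,r]$ contains no full $I_n$. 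It therefore has length at most $2\max|I_n|\le 4r^2$, which gives $s\le 5r^2$.

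Second, and more seriously, the passage to nonporosity cannot be anchored at the origin. Lemma \ref{cubes_lemma} needs cubes centered at points of the graph, and $Gr(h)$ is in fact porous at $(0,0)$. Members of $\mathcal{H}_{sm}$ vanish on $(-\infty,0]$, so the graph meets $\{x<0\}$ only in the negative axis (or not at all, if you restrict to $[0,1]$). Hence $B((-r/2,r/4),r/4)\subset B(0,r)$ misses $Gr(h)$ for every $r>0$. Your assertion that a porosity hole in $B(0,r)$ can be pushed into $\{x>0\}$ ``within a constant factor'' is false, and extending $h$ oddly changes the set under study.

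The repair is to center at a graph point inside the quadrant. The $I_n$ containing $r/2$ has spike height $1/\sqrt n\ge\sqrt{r/2}\ge r/2$ and length at most $r^2/2$. So some $P=(t,r/2)\in Gr(h)$ has $|t-r/2|\le r^2/2$. For $r\le 1/2$, the cube of side $r/2$ centered at $P$ lies in $[0,r]^2$. With $r=1/N$, its empty subcubes have side at most $5r^2=(10/N)(r/2)$, which is exactly the hypothesis of Lemma \ref{cubes_lemma}.

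The paper sidesteps both issues by placing its squares at $x\in[(1-1/\sqrt2)/n,\,1/n]$. That region is away from the origin and under the guide $y=\sqrt x$ (via the circle $C_n$). So every spike there exceeds the square's height, and the empty horizontal gap on the bottom edge is $\lesssim 1/n^2$.
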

\begin{proof}
For $n \geq 2$, consider the circles $C_n = \{(x - r_n)^2 + y^2 = r_n^2\}$ with $r_n = 1/(2n)$.  Note that every point $(x,y) \in C_n$ has $y < \sqrt{x}$ except $(0,0)$, so the height of every spike exceeds that of $C_n$.  This ensures there are no large gaps in the circle that are not due to space between spikes.  Also consider the inscribed oriented cubes $Q_n'$ of $C_n$, each with side length $l(Q_n) = 1/(\sqrt{2}n).$  Then translate $Q_n'$ to place the right hand side of the cube at $x = 1/n$.  Concretely, put 
\begin{equation}
    \begin{split}
        \alpha_n &= \frac{\diam (C_n) - l(Q_n)}{2} = \frac{1}{2}\left(\frac{1}{n} - \frac{1}{\sqrt{2}n} \right) = \frac{\sqrt{2} - 1}{2 \sqrt{2} n}, \\
        Q_n &= Q_n' + (\alpha_n, 0). \\
    \end{split}
\end{equation}

\begin{figure}
    \centering
    \begin{minipage}{0.48\textwidth}
        \centering
        \includegraphics[width=\linewidth]{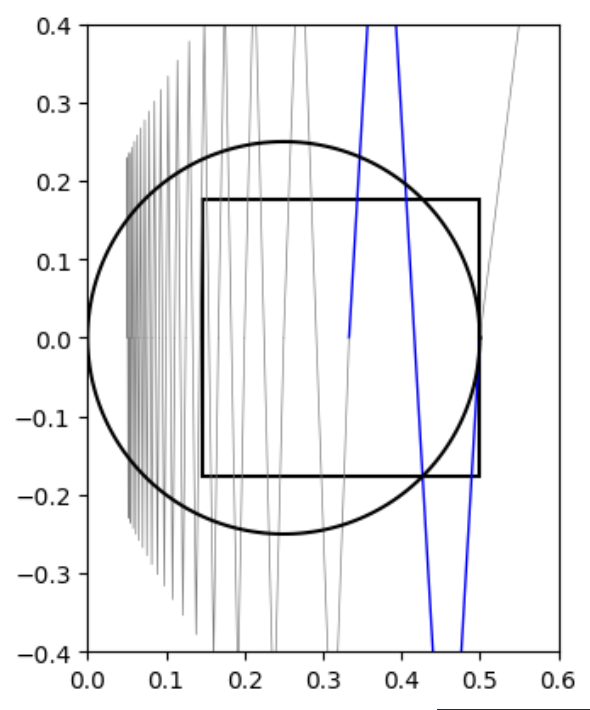}
        \caption{The second generation circle $C_2$ and the boundary of its translated inscribed cube $Q_2$.}
        \label{square1}
    \end{minipage}
    \hfill
    \begin{minipage}{0.48\textwidth}
        \centering
        \includegraphics[width=\linewidth]{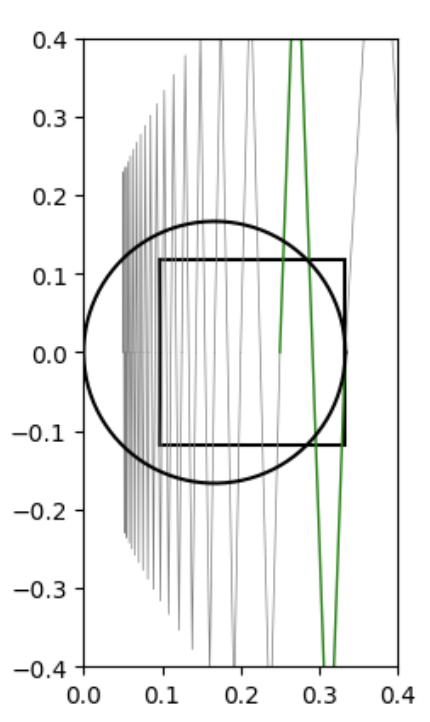}
        \caption{$C_3$ and $\partial Q_3$.}
        \label{fig:placeholder}
    \end{minipage}
\end{figure}

Let $E_n = Gr(h|_{I_n})$.  The longest horizontal line segment $S_2 \subset \overline{Q}_2$ that does not meet $E$ lies on the bottom of $\partial Q_2$ between the right-most segment of $E_3$ and the middle segment of $E_2$.  Generally, the longest such $S_n \subset \overline{Q}_n$ lies on the bottom of $\partial Q_n$ between the right-most segment of $E_{n+1}$ and the middle segment on $E_n$.

It is tedious to work with $l(S_n)$ exactly, so we note that $l(S_n) < l(S_n^{1,2})$, where the latter quantity is defined as follows.  The left-most segment of $E_n$ lies on the line $L_1$, which intersects the bottom of $\partial Q_n$, say at $v_n \in \R^2$.  Also, let us write $L_2$ for the line containing the middle segment of $E_n$.  There is a point $w_n \in L_2$ for which $\Im(w_n) = \Im(v_n)$, so we let $S_n^{1,2}$ be the line segment connecting $v_n$ to $w_n$, and hence $l(S_n^{1,2}) = |v_n - w_n|.$  Also let $L_3$ be the line containing $S_n^{1,2}.$

To see that $l(S_n) \lesssim l(Q_n)/n$, we need the equations of $L_1$ and $L_2$ which have slopes
\begin{equation}
\begin{split}\label{In_slope}
    m_1 = \frac{1 / \sqrt{n} }{ |I_n|/4 } = 4\sqrt{n}(n+1) ~~\text{ and }~~ m_2 = -m_1.
\end{split}
\end{equation}
Note that $L_1$ and $L_2$ have roots $1/(n+1)$ and $1/(n+1) + |I_n|/2$ respectively, so the three relevant linear equations are 
\begin{equation}
\begin{split}\label{consec_spike_lines}
    L_1: ~~ y &= 4\sqrt{n}(n+1) \left( x - \frac{1}{n+1}\right) \\
    L_2: ~~ y &= -4\sqrt{n}(n+1) \left( x - \frac{1}{n+1} - \frac{|I_n|}{2} \right) \\
    &= -4\sqrt{n}(n+1) \left( x - \frac{1}{n+1} - \frac{1}{2n(n+1)} \right) \\
    L_3: ~~ y &= -l(Q_n)/2 = -\frac{1}{2\sqrt{2}n}
\end{split}
\end{equation}
Recall that $\{v_n\} = L_1 \cap L_3$ and $\{w_n\} = L_2 \cap L_3$.  After tedious computations performed by Wolfram Alpha, equations \eqref{consec_spike_lines} show
\[
    \Re(v_n) = \frac{16 + \frac{\sqrt{2}}{n^{3/2}}}{16n + 16}, ~~ \Re(w_n) = \frac{16 +\frac{8}{n} - \frac{\sqrt{2}}{n^{3/2}}}{16n + 16}
\]
Therefore
\begin{equation}
    \begin{split}\label{lS}
        l(S_n^{1,2}) = \Re(w_n) - \Re(v_n) &= \frac{1}{16n + 16} \left( \frac{8}{n} - \frac{2\sqrt{2}}{n^{3/2}} \right)\\
        &\lesssim \frac{1}{n^2}.
    \end{split}
\end{equation}
It now follows from \eqref{lS} and the fact that $l(Q_n) = 1/(\sqrt{2}n)$ that 
\begin{equation*}
    \begin{split}
        l(S_n) < l(S_n^{1,2}) \lesssim \frac{1}{n^2} = \frac{\sqrt{2} l(Q_n)}{n}.
    \end{split}
\end{equation*}
Lemma \ref{cubes_lemma} now provides the desired conclusion because any cube $Q \subset (Q_n \setminus Gr(h)$ must have $l(Q) \leq l(S_n)$.
\end{proof}

\begin{corollary}
    $Gr(h_{k,p})$ is nonporous, where $h_{k,p}\in \mathcal{H}_{sm}$.
\end{corollary}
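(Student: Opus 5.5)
The plan is to deduce the corollary from the theorem just proved for $h_{0,1/2}$, by checking that the only features used in that proof persist for every $h_{k,p}$. The first step is to compare the graphs of $h_{k,p}$ and $h_{0,1/2}$ near the origin. For each $n$, the interval $I_n$ is cut into $n^k\geq 1$ subintervals. On $I_{n,0}$, the first upward spike of $h_{k,p}$ has height $1/n^p \geq 1/\sqrt{n}$, since $p\leq 1/2$. Its first ascending segment leaves $(1/(n+1),0)$ with slope $m_1' = (1/n^p)/(|I_{n,0}|/4) = 4n^{k+1-p}(n+1)$. This is at least the slope $m_1 = 4\sqrt n (n+1)$ used for $h_{0,1/2}$.

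Next, I will reuse the cubes $Q_n$ constructed in the preceding proof, with the same side length $l(Q_n)=1/(\sqrt2 n)$ and bottom edge at height $-l(Q_n)/2$. Fix any horizontal line $y=c$ with $|c|\leq l(Q_n)/2 < 1/\sqrt{n+1}$. Within each $I_{n'}$, every spike of $h_{k,p}$ reaches height at least $1/n'^p\geq 1/\sqrt{n'}$, which exceeds $|c|$ for all relevant $n'$. The argument with the circles $C_n$ then shows that the spikes of $h_{k,p}$ cross every horizontal line in $Q_n$. Consecutive crossings of the graph with $y=c$ inside $I_n$ are separated by at most the distance between the start of one ascending segment and the nearest descending segment. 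That distance is bounded by $|I_{n,m}|/2 + 2|c|/m_1' \lesssim |I_n|$, so every horizontal gap of $Q_n\setminus Gr(h_{k,p})$ has length $\lesssim 1/n^2$.

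It remains to handle the gap straddling $x=1/(n+1)$, between the rightmost segment over $I_{n+1}$ and the first segment over $I_n$. The same bound applies there. The last descending spike over $I_{n+1}$, followed by the return to $0$ at $1/(n+1)$, and the first ascending segment over $I_n$ both cross level $c$ within distance $\lesssim |I_{n+1}|+|I_n|\lesssim 1/n^2$ of $1/(n+1)$. I then conclude exactly as before. Any cube $Q\subset Q_n\setminus Gr(h_{k,p})$ has $l(Q)\leq \sup$ of the horizontal gaps $\lesssim 1/n^2 = \sqrt2\, l(Q_n)/n$, so Lemma \ref{cubes_lemma} gives nonporosity.

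The main obstacle is ensuring that every spike over the intervals $I_{n'}$ meeting $\pi_1(Q_n)$ is tall enough to cross the whole vertical range of $Q_n$. Since $Q_n$ extends left to roughly $(1-1/\sqrt2)/n$, the relevant $n'$ range up to about $3.4n$. For these, $1/n'^p \geq 1/\sqrt{n'}\gtrsim 1/\sqrt{n}$, and this comfortably beats $l(Q_n)/2\approx 0.35/n$ for $n\geq 2$. I would verify this by the explicit comparison $1/\sqrt{3.5n} > 1/(2\sqrt2 n)$, which holds for all $n\geq1$. A fallback is Lemma \ref{r_cubes_lemma} with $r_n$ chosen so that $[0,r_n]$ contains many full intervals $I_{n'}$ per subinterval. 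That works because $|I_{n'}|\approx 1/n'^2$ is tiny relative to $r_n/n$ when $r_n\approx 1/\sqrt{n}$ scale choices are made, in the spirit of Lemma \ref{two_in_one}.
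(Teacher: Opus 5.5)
Your proposal is correct and follows the paper's route. The paper deduces the corollary from the theorem for $h_{0,1/2}$ by observing that the spikes of $h_{k,p}$ are taller and more frequent, and you carry out that comparison explicitly: the heights $1/n'^p\ge 1/\sqrt{n'}$ exceed $l(Q_n)/2$ on every $I_{n'}$ meeting $\pi_1(Q_n)$, and every horizontal gap in $Q_n\setminus Gr(h_{k,p})$ is $\lesssim |I_n|\lesssim 1/n^2$.

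Two harmless slips do not affect the main argument. For levels $c<0$, the first crossing in $I_n$ lies on the first downward spike, not on the first ascending segment; the bound $\lesssim |I_n|$ is unchanged. Also, your fallback via Lemma \ref{r_cubes_lemma} fails with $r_n\approx 1/\sqrt{n}$ when $p=1/2$: the spikes over $[0,r_n/n]$ reach height at most $(r_n/n)^{1/2}$, which is below $r_n$ unless $r_n\le 1/n$.
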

\begin{proof}
    The graph of $h_{0, 1/2}$ is sparser than that of any other $h_{k,p} \in \mathcal{H}$ because the spikes are taller and more frequent.  Because the graph of $h_{0,1/2}$ is nonporous, the graph of any $h_{k,p}$ is clearly also nonporous.
\end{proof}

The following question remains open.
    \begin{question}
	Is the graph of every H\"older function $f:[0,1] \rightarrow \R$ thin for doubling measures?
    \end{question}

\begin{acknowledgement*}
	The author thanks Leonid Kovalev for suggesting significant improvements along with Lemma \ref{sum_spikes_lemma} and references \cite{CT,Fraser,Goffman}.
\end{acknowledgement*}


\bibliographystyle{plain}
\bibliography{refs}

@article{ChenWen,
  author    = {Chen, Changhao and Wen, Shengyou},
  title     = {On thin carpets for doubling measures},
  journal   = {Proc. Amer. Math. Soc.},
  volume    = {147},
  year      = {2019},
  number    = {8},
  pages     = {3439--3449},
  issn      = {0002-9939},
  doi       = {10.1090/proc/14493},
}

@article {CT,
    AUTHOR = {Chrontsios-Garitsis, Efstathios-K. and Tyson, Jeremy T.},
     TITLE = {On the {A}ssouad spectrum of {H}\"older and {S}obolev graphs},
   JOURNAL = {Math. Proc. Cambridge Philos. Soc.},
  FJOURNAL = {Mathematical Proceedings of the Cambridge Philosophical
              Society},
    VOLUME = {180},
      YEAR = {2026},
    NUMBER = {1},
     PAGES = {105--131},
      ISSN = {0305-0041,1469-8064},
   MRCLASS = {28A78 (26A16 26A27 26A46 46E35)},
  MRNUMBER = {5004052},
       DOI = {10.1017/S0305004125101527},
       URL = {https://doi-org.ezproxy.rit.edu/10.1017/S0305004125101527},
}

@article{DiMarco,
  author  = {DiMarco, Claudio A.},
  title   = {Zygmund graphs are thin for doubling measures},
  journal = {J. Math. Anal. Appl.},
  volume  = {532},
  year    = {2024},
  number  = {1},
  pages   = {Paper No. 127954, 7},
  issn    = {0022-247X},
  doi     = {10.1016/j.jmaa.2023.127954},
}

@book {Fraser,
    AUTHOR = {Fraser, Jonathan M.},
     TITLE = {Assouad dimension and fractal geometry},
    SERIES = {Cambridge Tracts in Mathematics},
    VOLUME = {222},
 PUBLISHER = {Cambridge University Press, Cambridge},
      YEAR = {2021},
     PAGES = {xvi+269},
      ISBN = {978-1-108-47865-6},
   MRCLASS = {28-02 (28A78 28A80)},
  MRNUMBER = {4411274},
MRREVIEWER = {Tushar\ Das},
       DOI = {10.1017/9781108778459},
       URL = {https://doi-org.ezproxy.rit.edu/10.1017/9781108778459},
}

@article {FY,
    AUTHOR = {Fraser, Jonathan M. and Yu, Han},
     TITLE = {New dimension spectra: finer information on scaling and
              homogeneity},
   JOURNAL = {Adv. Math.},
  FJOURNAL = {Advances in Mathematics},
    VOLUME = {329},
      YEAR = {2018},
     PAGES = {273--328},
      ISSN = {0001-8708,1090-2082},
   MRCLASS = {28A80 (26A21 28A78 30L05)},
  MRNUMBER = {3783415},
MRREVIEWER = {Jeremy\ T.\ Tyson},
       DOI = {10.1016/j.aim.2017.12.019},
       URL = {https://doi-org.ezproxy.rit.edu/10.1016/j.aim.2017.12.019},
}

@article{Garnett,
  author  = {Garnett, John and Killip, Rowan and Schul, Raanan},
  title   = {A doubling measure on $\mathbb{R}^d$ can charge a rectifiable curve},
  journal = {Proc. Amer. Math. Soc.},
  volume  = {138},
  year    = {2010},
  number  = {5},
  pages   = {1673--1679},
  issn    = {0002-9939},
  doi     = {10.1090/S0002-9939-10-10234-2},
}

@article{Goffman,
  author    = {Goffman, Casper},
  title     = {Porous sets and convergence of {Fourier} series},
  journal   = {Proc. Amer. Math. Soc.},
  volume    = {123},
  year      = {1995},
  number    = {12},
  pages     = {3701--3703},
  issn      = {0002-9939},
  doi       = {10.2307/2161896},
}

@book{Heinonen,
  author    = {Heinonen, Juha},
  title     = {Lectures on analysis on metric spaces},
  series    = {Universitext},
  publisher = {Springer-Verlag, New York},
  year      = {2001},
  pages     = {x+140},
  isbn      = {0-387-95104-0},
  doi       = {10.1007/978-1-4613-0131-8},
}

@article {Luukkainen2,
    AUTHOR = {Luukkainen, Jouni},
     TITLE = {Assouad dimension: antifractal metrization, porous sets, and
              homogeneous measures},
   JOURNAL = {J. Korean Math. Soc.},
  FJOURNAL = {Journal of the Korean Mathematical Society},
    VOLUME = {35},
      YEAR = {1998},
    NUMBER = {1},
     PAGES = {23--76},
      ISSN = {0304-9914,2234-3008},
   MRCLASS = {54F45 (28A75)},
  MRNUMBER = {1608518},
}

@article{Luukkainen,
  author  = {Luukkainen, Jouni and Saksman, Eero},
  title   = {Every complete doubling metric space carries a doubling measure},
  journal = {Proc. Amer. Math. Soc.},
  volume  = {126},
  year    = {1998},
  number  = {2},
  pages   = {531--534},
  issn    = {0002-9939},
  doi     = {10.1090/S0002-9939-98-04201-4},
}

@article{Ojala,
  author  = {Ojala, Tuomo and Rajala, Tapio},
  title   = {A function whose graph has positive doubling measure},
  journal = {Proc. Amer. Math. Soc.},
  volume  = {144},
  year    = {2016},
  number  = {2},
  pages   = {733--738},
  issn    = {0002-9939},
  doi     = {10.1090/proc12748},
}

@article{Ojala2,
  author  = {Ojala, Tuomo and Rajala, Tapio and Suomala, Ville},
  title   = {Thin and fat sets for doubling measures in metric spaces},
  journal = {Studia Math.},
  volume  = {208},
  year    = {2012},
  number  = {3},
  pages   = {195--211},
  issn    = {0039-3223},
  doi     = {10.4064/sm208-3-1},
}

@article{Wu,
  author  = {Wu, Jang-Mei},
  title   = {Null sets for doubling and dyadic doubling measures},
  journal = {Ann. Acad. Sci. Fenn. Ser. A I Math.},
  volume  = {18},
  year    = {1993},
  number  = {1},
  pages   = {77--91},
  issn    = {0066-1953},
}

\end{document}